\newcommand{\de}{\delta}    
  \newcommand{\ep}{\varepsilon}
   \newcommand{\La}{\Lambda}
\newcommand{\ga}{\gamma}    
\newcommand{\R}{\mathbb{R}}
\newcommand{\Sp}{\mathbb{S}}
\def\H{{\mathbb{H}}}
\newcommand{\pt}{\partial_t}\newcommand{\pa}{\partial}
\newcommand{\les}{{\lesssim}}
\newcommand{\beeq}{\begin{equation}}\newcommand{\eneq}{\end{equation}}
\def\<{\langle}             \def\>{\rangle}
\def \endprf{\hfill  {\vrule height6pt width6pt depth0pt}\medskip}
\newtheorem{thm}{Theorem}[section]
\newtheorem{prp}[thm]{Proposition}
\newtheorem{lem}[thm]{Lemma}
\newtheorem{coro}[thm]{Corollary}
\theoremstyle{remark}
\theoremstyle{definition}
\numberwithin{equation}{section}
\begin{document}


\title
{The Strauss conjecture on negatively curved backgrounds
}

\author{Yannick Sire}
\address{Department of Mathematics\\                Johns Hopkins University\\                Baltimore, MD 21218}
\email{sire@math.jhu.edu}

\author{Christopher D. Sogge}
\address{Department of Mathematics\\                Johns Hopkins University\\                Baltimore, MD 21218}
\email{sogge@jhu.edu}

\author{Chengbo Wang}
\address{School of Mathematical Sciences\\                Zhejiang University\\                Hangzhou 310027, China}
\email{wangcbo@zju.edu.cn}
\urladdr{http://www.math.zju.edu.cn/wang}


\thanks{ The second author was supported in part by NSF Grant DMS-1665373 and the first two authors were supported in part by a Simons Fellowship.
The third author was supported in part by 
NSFC 11971428 and
National Support Program for Young Top-Notch Talents.}

\begin{abstract}
This paper is devoted to several small data existence results for semi-linear wave equations on negatively curved Riemannian manifolds. 
We provide a simple and geometric proof of small data global existence for any power $p\in (1, 1+\frac{4}{n-1}]$ for 
the shifted wave equation on hyperbolic space $\H^n$ involving nonlinearities of the form
$\pm |u|^p$ or $\pm|u|^{p-1}u$.   It is based on
the weighted Strichartz estimates of Georgiev-Lindblad-Sogge~\cite{GLS} (or
Tataru~\cite{Tataru}) on  Euclidean space. We also 
prove a small data existence theorem for variably curved backgrounds
which extends earlier
ones for the constant curvature case of
Anker and Pierfelice \cite{MR3254350} and 
Metcalfe and Taylor \cite{MR2944727}.   We also discuss the role of curvature and state a couple of  open problems.
Finally, in an appendix, we give an alternate proof of dispersive estimates of Tataru~ \cite{Tataru} for ${\mathbb H}^3$ and settle a dispute, in his favor,
raised in~\cite{MT} about his proof.  Our proof is slightly more self-contained than the one
in \cite{Tataru} since it does not make use of heavy spherical analysis on hyperbolic space such as the Harish-Chandra $c$-function; instead  it relies only
on simple facts about Bessel potentials.
\end{abstract}

\keywords{
Wave equations, Curvature,
Strauss conjecture, Strichartz estimates,  weighted Strichartz estimates, Bessel potentials}


\dedicatory{To Luis Caffarelli on the occasion of his 70th birthday, with admiration and friendship.}

\maketitle

\tableofcontents


\section{Introduction}\label{sec:relate}
As is well-known,
 wave equations on  hyperbolic space $\H^n$, $n\ge 2$, are closely related with
  wave equations on $\R^n \times \R$, see, e.g., Tataru
 \cite{Tataru}.  This paper is devoted to several results concerning global well-posedness for small data on negatively curved Riemannian manifolds. It is well-known fact that small data existence for nonlinear wave equations with power-like nonlinearities is related to the so-called {\sl Strauss conjecture} in $\mathbb R^n$. This paper is three-fold: we provide first a general small data existence result for some range of $p's$ based on the use of Hamiltonian identities and avoiding the heavy machinery of Strichartz estimates;  second, we provide a geometric alternative proof of the optimal global existence on hyperbolic spaces for smooth data. Finally, we provide the same result for rough data, settling in the case $n=3$ a dispute concerning dispersive estimates by Tataru by providing an 
 alternate argument, which is based on almost the same analytic interpolation scheme.

Let $n\ge 2$ and $(M,g)$ be an $n$-dimensional complete  Riemannian manifold, 
and let $\Delta_g$ be the standard Laplace-Beltrami operator on $M$.
The problems under consideration are of the following form for a scalar unknown function $u:\R\times M\to \R$,
 \begin{equation}\label{eq-nlw}
\left\{
\begin{array}{ ll}
(\partial_t^2-\Delta_g+k) u=F_p(u),
     &    t>0, x\in M\\
u(0,x)=\ep u_0(x), \pa_t u(0,x)=\ep u_1(x),      &  (u_0,u_1)\in C^\infty_0(M),
\end{array}\right.
\end{equation}
where $k$ is a constant such that $-\Delta_g+k\ge 0$, $p>1$, and $F_p\in C^1$ behaves like $\pm |u|^p$ or $\pm |u|^{p-1} u$, by which we mean that
\beeq\label{eq-Fp}|F_p(u)|+|u||F'_p(u)|\le C |u|^p,
\eneq
for some constant $C>0$.
 When $M$ is non-compact, 
$C^\infty_0(M)$ denotes the space of smooth functions with compact support. Otherwise $C^\infty_0(M)=C^\infty(M)$.
For these Cauchy problems, the task is then to determine the range of $p$ such that we have the following small data global existence:
for any given data $(u_0, u_1)$, there exists an $\ep_0>0$ such that
there is a global solution to \eqref{eq-nlw} for any $\ep\in (0, \ep_0]$.
We define the critical power $p_c(n)$ as the infimum of $p>1$ such that there is small data global existence.
 
 These problems are of course closely related to the so called Strauss conjecture, when $(M,g)$ is the Euclidean space and $k=0$, with $F_p(u)=|u|^p$.  The first work in this direction is \cite{John79}, where  John determined the critical power, $1+\sqrt{2}$, for the problem when $n=3$, by
proving  global existence results for $p>1+\sqrt{2}$ and
blow-up results for $p<1+\sqrt{2}$.
It was known from Kato \cite{Kato80} that there is no small data global solution in general, for $n=1$ or $1<p<1+2/(n-1)$.
Shortly afterward, Strauss \cite{Strauss81} 
conjectured that the critical power $p_S(n)$ for other dimensions $n\ge 2$ should be the positive root of the quadratic equation $$(n - 1)p^2
- (n + 1)p - 2 = 0.$$
The existence portion of the conjecture was verified in Glassey \cite{Glassey81ex}
($n=2$), Zhou~\cite{Zhou95} ($n=4$), Lindblad-Sogge \cite{LdSo96} ($n\le 8$), and
Georgiev-Lindblad-Sogge \cite{GLS}, Tataru~\cite{Tataru} (all  $n$, $p_S<p\le p_{\rm conf}$),
where 
$$p_{\rm conf}(n)=1+\frac{4}{n-1}$$
is the conformal power.
  The necessity of $p>p_S(n)$
for small data global existence is due to Glassey \cite{Glassey81bu} ($1<p<p_S(2)$),
Sideris \cite{Sideris} ($1<p<p_S(n)$, $n\ge 4$), 
Schaeffer~\cite{Scha85} ($p=p_S(n)$, $n=2, 3$),
and  Yordanov-Zhang \cite{YorZh06}, Zhou \cite{Zh07} ($p=p_S(n)$, $n\ge 4$).
See Wang-Yu \cite{WaYu12rev} or Wang \cite{Wang18} for more references.

 Another model of particular interest is the so-called Klein-Gordon equation on $\mathbb{R}^n$, with $k=m^2>0$: \beeq\label{eq-KG}(\partial_t^2-\Delta+m^2)u=|u|^p.\eneq
In view of the decay rate for the solutions to  the homogeneous  Klein-Gordon equation, it is natural to expect that the critical power to be given by the 
$$p_F(n)=1+\frac{2}{n},$$ which is also known as the Fujita's exponent for the heat equation.
Lindblad-Sogge~\cite{LdSo96-2} proved small data global existence for any $p>p_F(n)=1+2/n$ with $n=1,2,3$. Moreover,
Keel-Tao \cite{KeelTao99} provided an example 
$$(\partial_t^2-\Delta+m^2)u=F(u_t)-|u|^{p-1}u,$$
for which they showed that there is no global solutions for any $1<p\le p_F(n)$.
Here
$F(v)\sim |v|^{p-1} v$ for $|v|\le 1$,
 $F(v)\sim |v|^{q-1} v$ with some $1<q<p$ for $|v|\ge 1$.

However, $p_F$ is not the critical power for \eqref{eq-KG}.
Actually, it is known that it admits global energy solutions (with small energy data) for any energy subcritical powers, that is $p\in (1, 1+4/(n-2))$.   See, e.g., Keel-Tao \cite[pp. 631-632]{KeelTao99}.

 Real hyperbolic spaces, $(\H^n, h)$, are the first examples of rank 1 symmetric spaces of non-compact type and their spherical analysis is to a certain extent very parallel to the one in the Euclidean case. The problem \eqref{eq-nlw} on  hyperbolic spaces with $k=0$ is
 \beeq\label{eq-KG-h}(\partial_t^2-\Delta_h)u=|u|^p,\eneq 
was first considered by Metcalfe and Taylor in \cite{MT}, where they proved small data global existence for $p\in [5/3, 3]$ for dimension $n=3$, by proving improved dispersive and Strichartz estimates. 
Then Anker and Pierfelice \cite{MR3254350} 
proved global existence for 
the problem \eqref{eq-nlw} on  hyperbolic spaces with $k>-\rho^2$,
 \beeq\label{eq-KG-h2}(\partial_t^2-\Delta_h+k)u=|u|^p,\eneq 
 for any $p\in (1, p_{\rm conf}]$ and $n\ge 2$, where $\rho=(n-1)/2$. 
 Soon after, Metcalfe and Taylor  \cite{MR2944727}  gave an alternative proof for $n=3$ with $k=0$.
 This shows that the critical power for this problem is actually $p_c=1$.

Recall that the spectrum of $-\Delta_h$ is $[\rho^2, \infty)$.  See, e.g.,  McKean~\cite{MR0266100}. 
This means
 that the equation \eqref{eq-KG-h2} is more like a nonlinear Klein-Gordon equation instead of a nonlinear wave equation. Thus, at least heuristically, it is not so surprising that we have small data global existence for any $p>1$ (with a certain upper bound on $p$ for technical reasons).
Actually, in the following general theorem, we prove that there is
small data global existence for any $1<p< 1+2/(n-2)$ (which is understood to be $p\in (1,\infty)$ for $n=2$).
\begin{thm}\label{thm-generic}
Let (M, g) be a smooth, complete Riemannian manifold of dimension $n\ge2$ with
Ricci curvature bounded from below and $\inf_{x\in M}{\rm Vol}_g(B(x))>0 $, where
${\rm Vol}_g(B(x))$
denotes the volume of the geodesic ball of center $x$ and radius $1$ with respect to $g$.
Assume that
$k$ is a constant such that ${\rm Spec}(-\Delta_g+k)\subset (c,\infty)$ for some $c>0$.
Then
 for any 
$p\in (1, 1+2/(n-2))$,
 there exists a constant $\ep_0>0$ such that \eqref{eq-nlw} with $\ep \in (0, \ep_0]$ admits global solutions, provided that the data 
$(u_0, u_1)$ satisfy
\beeq\label{eq-data}\|\sqrt{k-\Delta_g} u_0\|_{L^2(M)}^{2}+
\|u_1\|_{L^{2}(M)}^{2}\le 1.\eneq
\end{thm}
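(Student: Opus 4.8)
The plan is to avoid dispersive and Strichartz estimates entirely and run a pure energy argument in $H^1(M)\times L^2(M)$, using the spectral gap to turn the nonlinear term into an exactly conserved Hamiltonian. Two structural facts set the stage. First, the hypothesis $\mathrm{Spec}(-\Delta_g+k)\subset(c,\infty)$ with $c>0$ yields a Poincar\'e-type inequality $c\|u\|_{L^2}^2\le \|\sqrt{k-\Delta_g}\,u\|_{L^2}^2=\int_M(|\nabla_g u|^2+k|u|^2)\,dx$, so the energy norm $\|\sqrt{k-\Delta_g}\,u\|_{L^2}$ appearing in \eqref{eq-data} is equivalent to the full $H^1(M)$ norm, irrespective of the sign of $k$. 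Second, the two geometric assumptions---$\mathrm{Ric}$ bounded from below and $\inf_{x}\mathrm{Vol}_g(B(x))>0$---are precisely those needed for a \emph{uniform} Sobolev embedding $H^1(M)\hookrightarrow L^q(M)$ with $2\le q\le \tfrac{2n}{n-2}$ when $n\ge3$ (and all $q<\infty$ when $n=2$); I would quote this from the literature on Sobolev inequalities on manifolds of bounded geometry (e.g.\ Hebey).

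With these in hand, set $E(t)=\tfrac12\|\pt u\|_{L^2}^2+\tfrac12\|\sqrt{k-\Delta_g}\,u\|_{L^2}^2\simeq \|\pt u\|_{L^2}^2+\|u\|_{H^1}^2$, and let $G(s)=\int_0^s F_p(\tau)\,d\tau$, so that \eqref{eq-Fp} gives $|G(s)|\le \tfrac{C}{p+1}|s|^{p+1}$. Multiplying \eqref{eq-nlw} by $\pt u$ and integrating over $M$ shows $\tfrac{d}{dt}\big(E(t)-\int_M G(u)\,dx\big)=0$, i.e.\ the Hamiltonian $\mathcal{H}(t)=E(t)-\int_M G(u)\,dx$ is conserved. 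This is the \emph{Hamiltonian identity} that plays the role of a dispersive estimate: it is an algebraic (not time-integrated) relation, so it does not suffer the finite-time blow-up that the crude Gronwall bound $E'\les E^{(p+1)/2}$ would produce.

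Local existence in $C([0,T];H^1)\cap C^1([0,T];L^2)$ I would obtain by a standard Picard/energy iteration solving $(\pt^2-\Delta_g+k)u_{m+1}=F_p(u_m)$. The only nonlinear input is, via \eqref{eq-Fp} and H\"older, the pair of bounds $\|F_p(u)\|_{L^2}\les\|u\|_{L^{2p}}^p$ and $\|F_p(u)-F_p(v)\|_{L^2}\les(\|u\|_{L^{2p}}^{p-1}+\|v\|_{L^{2p}}^{p-1})\|u-v\|_{L^{2p}}$. By the uniform Sobolev embedding these close in the energy norm exactly when $2p\le\tfrac{2n}{n-2}$, i.e.\ $p\le 1+\tfrac{2}{n-2}$; this is where the upper restriction on $p$ enters, and it is precisely what replaces Strichartz estimates.

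Finally, the global a priori bound follows from the conservation law by a continuity argument. Since $p+1<\tfrac{2n-2}{n-2}<\tfrac{2n}{n-2}$ throughout the admissible range, $\big|\int_M G(u)\,dx\big|\le\tfrac{C}{p+1}\|u\|_{L^{p+1}}^{p+1}\les\|u\|_{H^1}^{p+1}\les E(t)^{(p+1)/2}$, so $\mathcal{H}(t)=\mathcal{H}(0)$ gives $E(t)\le |\mathcal{H}(0)|+C_0E(t)^{(p+1)/2}$, where $|\mathcal{H}(0)|\les\ep^2$ because $E(0)\le\tfrac{\ep^2}{2}$ and $\big|\int_M G(\ep u_0)\,dx\big|\les\ep^{p+1}$. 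Because $(p+1)/2>1$, this inequality is self-improving for small data: on any interval where $E\le\delta$ (with $\delta$ small and fixed) one has $C_0E^{(p+1)/2}\le\tfrac12 E$, hence $E(t)\le 2|\mathcal{H}(0)|\les\ep^2\ll\delta$; choosing $\ep_0$ small, a bootstrap/continuity argument upgrades the local solution to $\sup_{t}E(t)\les\ep^2$ and therefore to a global one. The main obstacle is the geometric input---securing the uniform Sobolev inequality from the Ricci and volume hypotheses; a secondary point is that in the focusing case ($F_p=+|u|^{p-1}u$) the term $\int_M G(u)$ has the unfavorable sign, so the argument genuinely relies on the smallness of $\mathcal{H}(0)$ together with the superlinearity $(p+1)/2>1$ rather than on any monotonicity.
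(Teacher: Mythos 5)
Your proposal is correct and follows essentially the same route as the paper: the uniform Sobolev embedding (quoted from Hebey) under the Ricci and volume hypotheses, energy-based local well-posedness whose only restriction is $2p\le 2n/(n-2)$, conservation of the Hamiltonian $E(t)-\int_M G_p(u)\,dV_g$, and a continuity/bootstrap argument closing the a priori bound $E(t)\lesssim \ep^2$ from $E\le C\ep^2+C E^{(p+1)/2}$. There are no substantive differences to report.
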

Our proof is elementary and completely avoids the somewhat
delicate dispersive and Strichartz estimates used in the aforementioned earlier works.
We first note that it is easy to prove local well-posedness in $C_tH^{1}\cap C_t^1 L^{2}$ for $p\in (1, 1+2/(n-2))$ by  classical energy arguments. Then the basic observation is that
the problem \eqref{eq-nlw}  is Hamiltonian and, for these types of ``Klein-Gordon  equations'',   the nonlinear part can be easily controlled by the linear part. Such arguments are also classical (see, e.g.,
Cazenave\cite{MR780103}, Keel-Tao \cite{KeelTao99}).
We remark also that the assumptions on Ricci curvature and ${\rm Vol}_g (B(x))$ are made to ensure the Sobolev estimates
\beeq\label{eq-Sobo}\|f\|_{L^{q}(M)}\les \|\sqrt{k-\Delta_g} f\|_{L^2(M)},  \quad 2\le q\le 2n/(n-2),\eneq
where it is understood that
$q\in [2,\infty)$ when $n=2$.

As a simple application, we see that
Theorem~\ref{thm-generic} applies for any manifold $(M,g)$, with $k>0$, since $-\Delta_g$ is nonnegative. The condition on $k$ is sharp in general, as we have seen that the critical power $p_c=p_S(n)>1$
for the Strauss conjecture on $\R^n$ ($k=0$).
The worst situation occurs for compact manifolds, for which it is easy to see that, generically, there is no small data global existence results for 
 \eqref{eq-nlw} with $k=0$ for any $p>1$. Actually, the simplest examples for this are \eqref{eq-nlw} with $F_p(u)=\pm |u|^p$ or $|u|^{p-1}u$ and constant data, which reduces to the ODE $u_{tt}=F_p(u)$ 
 which has the property that 
 generic solutions blow up in finite time. In particular, there is no small data global existence for 
 \eqref{eq-nlw} with $k=0$ and $F_p(u)=\pm |u|^p$ or $|u|^{p-1}u$, for any  complete Riemannian 
 manifolds $(M,g)$ with positive lower bound on the Ricci curvature,
 which is compact due to the Bonnet-Myers theorem (see e.g. \cite[p. 84]{Chavel}).

An important class of manifolds with the property ${\rm Spec}(-\Delta_g)\subset (0,\infty)$ is a
simply connected, complete, Riemannian $n$-manifold with sectional curvature $K\leq -\kappa$ for some constant $\kappa>0$, 
for which it is known that ${\rm Spec}(-\Delta_g)\subset [\rho^2\kappa,\infty)$,
see \cite{MR0266100}.
Recall also that a lower bound of 
 sectional curvature implies
that for the Ricci curvature and that an upper bound
ensures that $\text{Vol}_g(B(x))>\delta>0$ for some $\delta>0$ by the G\"unther comparison
theorem (see e.g. \cite[p. 129]{Chavel}).  Consequently, Theorem~\ref{thm-generic} yields the following
result for simply connected complete manifolds with negatively pinched curvature:

\begin{coro}\label{thm-NegCurv}
Let $(M,g)$ be a 
simply connected, complete, Riemannian manifold of dimension $n\ge2$ with sectional curvature $K\in [-\kappa_2, -\kappa_1]$ for some $\kappa_2\ge \kappa_1>0$.
Then for any $k>-\rho^2 \kappa_1$ and $p\in (1, 1+2/(n-2))$, there exists a constant $\ep_0>0$ such that the problem
 \beeq\label{eq-KG-NegCurv}(\partial_t^2-\Delta_g+k)u =F_p(u), \, \,  u(0)=\ep u_0,\ u_t(0)=\ep u_1\ \eneq 
 with  $\ep \in (0, \ep_0]$ admits global solutions, provided that the data 
$(u_0, u_1)$ satisfy
\eqref{eq-data}.
\end{coro}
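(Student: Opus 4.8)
The plan is to derive the corollary directly from Theorem \ref{thm-generic} by checking that the two-sided pinching $K\in[-\kappa_2,-\kappa_1]$, together with simple connectivity and completeness, forces each of the structural hypotheses of that theorem. Since both the conclusion and the data condition \eqref{eq-data} of Corollary~\ref{thm-NegCurv} coincide with those of Theorem~\ref{thm-generic} once the hypotheses are verified, essentially no new analysis is required: the work is entirely in invoking the standard comparison results quoted in the paragraphs preceding the corollary. I would begin with the Ricci lower bound. Tracing the sectional curvature bound $K\ge -\kappa_2$ over an orthonormal frame gives ${\rm Ric}_g\ge -(n-1)\kappa_2\, g$, so the Ricci curvature is bounded from below as needed.

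Next I would establish the uniform lower bound $\inf_{x\in M}{\rm Vol}_g(B(x))>0$. Because $M$ is simply connected, complete, and satisfies $K\le -\kappa_1<0$, the Cartan--Hadamard theorem shows that $\exp_x\colon T_xM\to M$ is a global diffeomorphism, so each geodesic ball $B(x)$ of radius $1$ is embedded and free of conjugate points. The upper bound $K\le -\kappa_1$ then permits an application of the G\"unther volume comparison theorem, comparing $B(x)$ with the unit ball in the model space form of constant curvature $-\kappa_1$. Since that model is homogeneous, its unit-ball volume $V_{-\kappa_1}(1)$ is a positive constant independent of $x$, yielding ${\rm Vol}_g(B(x))\ge V_{-\kappa_1}(1)>0$ uniformly in $x$. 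The one point deserving care here is precisely this uniformity, which rests on the homogeneity of the comparison model.

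I would then secure the positive spectral gap. By McKean's theorem, as cited above, a simply connected complete manifold with $K\le -\kappa_1$ satisfies ${\rm Spec}(-\Delta_g)\subset[\rho^2\kappa_1,\infty)$, with $\rho=(n-1)/2$. Shifting by the constant $k$ gives ${\rm Spec}(-\Delta_g+k)\subset[\rho^2\kappa_1+k,\infty)$. The hypothesis $k>-\rho^2\kappa_1$ makes the endpoint $c:=\rho^2\kappa_1+k$ strictly positive, whence ${\rm Spec}(-\Delta_g+k)\subset(c/2,\infty)$ with $c/2>0$; this is exactly the spectral condition required by Theorem~\ref{thm-generic}, and in particular it already entails $-\Delta_g+k\ge c>0$.

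With the Ricci lower bound, the uniform volume lower bound, and the strict spectral gap all in hand, Theorem~\ref{thm-generic} applies verbatim and produces, for each $p\in(1,1+2/(n-2))$, a threshold $\ep_0>0$ and global solutions of \eqref{eq-KG-NegCurv} for all $\ep\in(0,\ep_0]$ under the data bound \eqref{eq-data}. I do not expect any genuine analytic obstacle; the only real bookkeeping is the matching of the pinching constants, since the lower bound $-\kappa_2$ feeds the Ricci hypothesis while the upper bound $-\kappa_1$ is used twice---once through G\"unther for the volume estimate and once through McKean for the spectral gap---and it is the alignment of $\kappa_1$ with the threshold $k>-\rho^2\kappa_1$ that keeps $c$ positive.
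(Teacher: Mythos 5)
Your proposal is correct and follows essentially the same route as the paper: verify the three hypotheses of Theorem~\ref{thm-generic} via the Ricci lower bound traced from $K\ge-\kappa_2$, the G\"unther comparison theorem for the uniform volume lower bound, and McKean's theorem for the spectral gap ${\rm Spec}(-\Delta_g)\subset[\rho^2\kappa_1,\infty)$, then apply the theorem. Your additional care (invoking Cartan--Hadamard to guarantee the injectivity radius needed for G\"unther, and halving $c$ to pass from a closed to an open spectral interval) only makes explicit points the paper leaves implicit.
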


We remark that this Corollary could be strengthened a bit by using, say, the results in \cite{MR1043421} and \cite{MR1353612} which involve slightly weaker curvature assumptions that also ensure that
$\text{Spec }(-\Delta_g+k)\subset (c,\infty)$, some $c>0$.

To state another corollary recall  that $(M,g)$ is said to be asymptotically hyperbolic, in the sense of Mazzeo-Melrose \cite{MM}, if there is a compact Riemannian manifold with boundary $(X, \tilde g)$, such that $M$ could be realized as the interior of $X$, with metric 
$g=f^{-2}\tilde g$, where $f$ is a smooth boundary defining function
\footnote{Here $f\ge 0$ on $X$, $\pa X=f^{-1}(0)$, and $df \neq 0$ on $\pa X$. } 
with 
$\|df\|_{\tilde g}=1$ on $\pa X$. It is known\footnote{The third author would like to thank Fang Wang and Meng Wang for helpful information on the spectrum.
}
that $${\rm Spec}(-\Delta_g)=[\rho^2,\infty)\cup \sigma_{pp}, \, \, \, 
\sigma_{pp}\subset (0, \rho^2),
$$
where  the pure point spectrum, $\sigma_{pp}$ (the set of $L^2$ eigenvalues), is finite.
See, e.g., 
 Graham and Zworski 
\cite[page 95-96]{MR1965361}. In particular, we see that ${\rm Spec}(-\Delta_g)\subset (c,\infty)$ for some $c>0$ and so Theorem \ref{thm-generic} applies with $k=0$ in this setting.  Consequently we have the following:
\begin{coro}\label{thm-AH}
Let $(M,g)$ be an $n$-dimensional  asymptotically hyperbolic manifold.
Then the problem
 \beeq\label{eq-KG-AH}(\partial_t^2-\Delta_g)u=F_p(u), \, \, u(0)=\ep u_0,\ u_t(0)=\ep u_1\ \eneq 
admits small data global solutions for any 
$p\in (1, 1+2/(n-2))$.
\end{coro}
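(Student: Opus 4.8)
The plan is to deduce the Corollary from Theorem~\ref{thm-generic} by verifying, with $k=0$, all of that theorem's hypotheses for an asymptotically hyperbolic manifold $(M,g)$. The spectral hypothesis is already supplied by the discussion preceding the statement: by the Mazzeo--Melrose/Graham--Zworski description, $\mathrm{Spec}(-\Delta_g)=[\rho^2,\infty)\cup\sigma_{pp}$ with $\sigma_{pp}$ a finite subset of $(0,\rho^2)$; since $\rho=(n-1)/2>0$ for $n\ge2$ and a finite subset of $(0,\rho^2)$ has positive minimum, the bottom of the spectrum is positive, i.e.\ $\mathrm{Spec}(-\Delta_g)\subset(c,\infty)$ for some $c>0$. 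It therefore remains to check the three \emph{geometric} conditions in Theorem~\ref{thm-generic}: completeness, a lower bound on the Ricci curvature, and the uniform volume bound $\inf_{x}\mathrm{Vol}_g(B(x))>0$.

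These follow from the structure of $g=f^{-2}\tilde g$ near $\pa X$. For completeness, since $f$ vanishes to first order on $\pa X$, the $g$-length of a curve approaching the boundary behaves like $\int_0 f^{-1}\,df=\infty$; thus $\pa X$ sits at infinite $g$-distance, every $g$-Cauchy sequence stays in a compact subset of the interior, and $(M,g)$ is complete. For the curvature, a conformally compact metric has sectional curvatures tending to $-\|df\|_{\tilde g}^2$ at the boundary, which is $-1$ under the normalization $\|df\|_{\tilde g}=1$ on $\pa X$; being smooth on $M$ and extending continuously to the compact $X$, these curvatures are uniformly bounded, $|K_g|\le C$, and in particular $K_g\ge -C$ gives the desired lower bound on the Ricci curvature.

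The volume bound is the one point I expect to require genuine care, and is the main obstacle. The argument of the remark after Corollary~\ref{thm-NegCurv} -- upper curvature bound plus G\"unther comparison -- needs an injectivity-radius input that there came for free from Cartan--Hadamard (simple connectivity and non-positive curvature), but which is unavailable here. Instead I would invoke the bounded geometry of asymptotically hyperbolic manifolds: in geodesic boundary coordinates $g\sim dr^2+e^{2r}h$ is uniformly comparable to the metric of $\H^n$ near $\pa X$ (where in fact the injectivity radius grows), while on the complementary compact region it is bounded below by compactness, so $\mathrm{inj}(M,g)\ge i_0>0$. Combining $K_g\le C$ with $\mathrm{inj}\ge i_0$, G\"unther's theorem gives $\mathrm{Vol}_g(B(x,\min(1,i_0)))\ge v_0>0$ uniformly in $x$, whence $\inf_x\mathrm{Vol}_g(B(x))\ge v_0>0$. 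Once these geometric hypotheses are in place, Theorem~\ref{thm-generic} with $k=0$ yields small-data global existence for all $p\in(1,1+2/(n-2))$, as claimed.
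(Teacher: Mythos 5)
Your proposal is correct and follows essentially the same route as the paper: the paper's entire proof consists of citing the Graham--Zworski description of $\mathrm{Spec}(-\Delta_g)=[\rho^2,\infty)\cup\sigma_{pp}$ to conclude $\mathrm{Spec}(-\Delta_g)\subset(c,\infty)$ for some $c>0$, and then invoking Theorem~\ref{thm-generic} with $k=0$. The only difference is that the paper leaves the geometric hypotheses of Theorem~\ref{thm-generic} (completeness, Ricci curvature bounded below, uniform lower volume bound) implicit as standard facts about conformally compact metrics, whereas you verify them explicitly via the infinite distance to the boundary, the asymptotic curvature normalization, and bounded geometry plus G\"unther comparison --- a useful filling-in of details, but not a different argument.
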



As we see from Theorem \ref{thm-generic}, in the Klein-Gordon case, the problem is relatively simple and the machinery of the Strichartz estimates could be avoided. As we have seen from the Strauss conjecture, the case of wave equations is much more delicate. 
To handle this case, one expects to have to develop space-time estimates that are specifically well-adapted to the problem.

In the case of the existence problem on hyperbolic spaces, 
 that is,
\eqref{eq-nlw} with $(M,g)=(\H^n, h)$ and 
$k=-\rho^2$, $\rho=(n-1)/2$,
 such that ${\rm Spec}(-\Delta_h+k)=[0,\infty)$,
 \beeq\label{eq-wave-h}\Box_{\H^n} u:=(\partial_t^2-\Delta_h-\rho^2)u=F_p(u), \quad
u(0)= \ep u_0, \, \, \pa_t u(0)=\ep u_1,
 \eneq  we expect that
 the critical power $p_c(n)$ satisfies
  $p_c(n)\le p_S(n)$, due to negative curvature and the resulting better decay behavior for the linear waves.
For convenience of presentation, we set $D_0=\sqrt{-\Delta_{\H^n}-\rho^2}$, $D=\sqrt{-\Delta_{\H^n}}$ and then we have
$$\Box_{\H^n}=\pt^2+D_0^2.$$

It was considered earlier by Fontaine
\cite{Fo97}, where 
 global existence with small data was proved for $n=2, 3$ and $p\ge 2$.
We note that
Anker, Pierfelice and Vallarino
\cite{APV12}, \cite{MR3345662} proved dispersive and Strichartz estimates 
for linear (shifted) wave equations 
on hyperbolic spaces and more generally Damek-Ricci spaces, which behave better than ones in  Euclidean space. 
With help of these estimates, it was shown that there is small data global existence
for certain $p > 1$ arbitrarily close to 1, which shows that the critical power $p_c(n)=1$.

The second aim of the present work is to provide a simple geometric proof of the small data global existence for
the less favorable equation \eqref{eq-wave-h} with
any power $p\in (1, 1+4/(n-1)]$. 
More precisely, we will prove the following result, based on the
space-time weighted Strichartz estimates of 
Georgiev-Lindblad-Sogge \cite{GLS} (see also Tataru \cite{Tataru} for the scale-invariant case).
\begin{thm}\label{GWPsmooth}
Let $p\in (1, p_{\rm conf}]$. Assume further that $F_{p}(u)$ is a homogeneous function of $u$, of order $p$, i.e.,  $F_{p}(u)=c|u|^{p-1}u$ or $c|u|^{p}$ for some $c$.
Then, for any $(u_0, u_1)\in C^\infty_0$, there exists a constant $\ep_1>0$ such that \eqref{eq-wave-h} with 
$\ep \in (0, \ep_1]$ admits  global solutions.
\end{thm}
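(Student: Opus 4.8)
The plan is to reduce the nonlinear problem \eqref{eq-wave-h} to a single a priori estimate and run a contraction-mapping argument in a weighted space-time norm, the linear input being a weighted (``Strauss-type'') estimate for the shifted wave propagator on $\H^n$ transplanted from the Euclidean estimate of \cite{GLS}. Writing the solution through Duhamel's formula
\[
u(t)=\ep\cos(tD_0)u_0+\ep\frac{\sin(tD_0)}{D_0}u_1+\int_0^t\frac{\sin((t-s)D_0)}{D_0}F_p(u(s))\,ds,
\]
I would look for a norm $X=\|w\,\cdot\|_{L^{p+1}(\R\times\H^n)}$ whose weight $w$ depends only on $t$ and the geodesic distance $r=d_{\H^n}(o,x)$ to a fixed origin $o$, and establish: (i) the free evolution of $C_0^\infty$ data lies in $X$ with $X$-norm $\les\ep$; (ii) a retarded estimate bounding $\int_0^t\frac{\sin((t-s)D_0)}{D_0}G(s)\,ds$ in the $X$-norm by $\|\tilde w\,G\|_{X'}$ for the dual weight $\tilde w$; and (iii) the bound $\|\tilde w\,F_p(u)\|_{X'}\les\|u\|_X^p$, which follows from the homogeneity $|F_p(u)|\les|u|^p$ once the weight exponents are matched by H\"older. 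Since $F_p\in C^1$ gives $|F_p(u)-F_p(v)|\les(|u|^{p-1}+|v|^{p-1})|u-v|$, the same estimates yield a contraction on a small ball of $X$, so (i)--(iii) combine to $\|u\|_X\les\ep+\|u\|_X^p$ and produce a global solution for $\ep$ small; local existence for smooth data and finite propagation speed justify the set-up.

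The heart of the argument is the weighted estimate (ii), and here I would exploit the transmutation that makes the shifted equation on $\H^n$ behave like the Euclidean wave equation. Writing the metric in geodesic polar coordinates as $dr^2+\sinh^2 r\,d\omega^2$ and conjugating the radial part of $-\Delta_h-\rho^2$ by $(\sinh r)^{(n-1)/2}$ turns it into a perturbation of $-\pa_r^2$; when $n=3$ this is exact, since $v=(\sinh r)\,u$ solves $v_{tt}=v_{rr}$ precisely when $u$ solves the radial shifted equation, mirroring the classical fact that $rw$ solves the $1$D wave equation when $w$ is a radial Euclidean wave. Thus the radial hyperbolic propagator equals the Euclidean one multiplied by the factor $(r/\sinh r)^{(n-1)/2}\le1$, the quantitative signature of the improved dispersion due to negative curvature. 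Crucially, because the hyperbolic volume element is $(\sinh r)^{n-1}\,dr\,d\omega$, transplanting an $L^{q}$ weighted norm with $q=p+1$ introduces the factor $(\sinh r/r)^{(n-1)(1-q/2)}$, which is $\le1$ exactly when $q\ge2$, i.e. $p\ge1$; this is precisely why the Euclidean estimate, sharp only down to $p_S(n)$ on $\R^n$, extends on $\H^n$ all the way down to $p=1$, with the exponential growth of $\sinh r$ absorbing the would-be obstruction at the Strauss exponent.

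With the transference in hand I would dominate the hyperbolic weighted norms by their Euclidean counterparts and invoke the Georgiev-Lindblad-Sogge weighted Strichartz estimates (and, in the scale-invariant endpoint, Tataru \cite{Tataru}), which hold precisely on the range $p_S(n)<p\le p_{\rm conf}(n)$ for the relevant conformal weights; the gain just described then yields (i) and (ii) for the full range $1<p\le p_{\rm conf}(n)$ claimed in the theorem, and step (iii) is the accompanying H\"older computation, where the homogeneity hypothesis on $F_p$ is used so that it scales cleanly as $|u|^p$ against the conformal weights. For general $n$ the transmutation is not a single explicit conjugation but an integral ``shift'' operator relating the hyperbolic spherical functions to the Euclidean ones; I would use the strong Huygens principle of the shifted equation in odd dimensions, together with the method of descent for even dimensions, to reduce the non-radial and higher-dimensional cases to the model computation above, so that the hyperbolic weighted norm is pointwise controlled by the Euclidean one.

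I expect the main obstacle to be making this transference uniform in $n$ and in the angular variables, and sharp at the endpoint. The clean conjugation is exact only for radial profiles and only collapses to $-\pa_r^2$ when $n=3$; in general one must control both the angular term $\tfrac1{\sinh^2 r}\Delta_\omega$ and the lower-order curvature corrections produced by the conjugation, while ensuring that the exponentially growing volume factor continues to help rather than hurt in the mixed weighted norms. The endpoint $p=p_{\rm conf}(n)$ is especially delicate, since there the Euclidean estimate is saturated and leaves no room for error terms, so the comparison of propagator kernels and of the conformal weight with its geodesic-distance analogue must be carried out with the exact exponents rather than up to harmless constants.
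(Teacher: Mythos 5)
Your high-level scheme (contraction in a weighted $L^{p+1}$ space-time norm with the Georgiev--Lindblad--Sogge estimates as the linear input) is the same as the paper's, but the mechanism by which you convert negative curvature into an enlargement of the range of $p$ is not the paper's, and it does not work. The paper never uses the spatial conjugation $v=(\sinh r)^{\rho}u$; it uses the change of variables $t=e^{\tau}\cosh s$, $r=e^{\tau}\sinh s$, which identifies $\H^n\times\R_\tau$ with the interior of the forward light cone and gives $\Box=e^{-(2+\rho)\tau}\Box_{\H^n}e^{\rho\tau}$. Writing $u=e^{\rho\tau}w$ and using the homogeneity of $F_p$ (this is exactly where that hypothesis enters), the problem becomes $\Box w=(t^2-r^2)^{-\sigma}F_p(w)$ with $\sigma=1-\frac{\rho}{2}(p-1)\ge 0$ for $p\le p_{\rm conf}$, with data on the hyperboloid $t=\sqrt{1+r^2}$. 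The essential point is that the curvature gain appears as a power of $t^2-r^2$, i.e.\ as $e^{-2\sigma\tau}$, a function of hyperbolic \emph{time}; it therefore lies inside the GLS weight family, so one can take $q=p+1$, $\gamma_2=-\gamma_1=\sigma/(p+1)$, which satisfies both the GLS relation $\gamma_2=\gamma_1-\frac{n-1}{2}+\frac{n+1}{q}$ and the nonlinear matching $\gamma_2-\sigma=p\gamma_1$, and the admissibility constraint $\gamma_1<\frac{n-1}{2}-\frac{n}{q}$ then reduces to $p>1$.

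Your gain, by contrast, is a function of hyperbolic \emph{space}, and that is the gap. Even in the one case where your transference is exact (radial data, $n=3$), your correspondence $u=(r/\sinh r)^{\rho}w$ turns the hyperbolic equation into the Euclidean equation $\Box w=(r/\sinh r)^{p-1}F_p(w)$, so the only curvature gain available to your argument is the spatial damping factor $(r/\sinh r)^{p-1}$. To close your step (iii) by H\"older against the GLS estimates you need the pointwise weight bound $(t^2-r^2)^{\gamma_2-p\gamma_1}(r/\sinh r)^{p-1}\le C$ on the support of the solution. Admissible GLS exponents with $\gamma_2-p\gamma_1\le 0$ exist precisely when $(n-1)p^2-(n+1)p-2>0$, i.e.\ $p>p_S(n)$ --- that computation \emph{is} the GLS proof of the Euclidean Strauss conjecture. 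Hence for $1<p\le p_S(n)$ every admissible choice has $\gamma_2-p\gamma_1>0$, and the required bound fails near the time axis: there $r$ stays bounded, $(r/\sinh r)^{p-1}\approx 1$, while $(t^2-r^2)^{\gamma_2-p\gamma_1}\to\infty$ as $t\to\infty$. An exponential gain in $r$ cannot be traded against weights that are constant on hyperboloids, so your iteration breaks down exactly on the range $1<p\le p_S(n)$ that the theorem is about; the gain must be exponential in $\tau=\frac12\log(t^2-r^2)$, which is what the conformal identification, not the radial transmutation, produces. The secondary steps are also unsupported: the conjugation collapses to $\partial_r^2$ only for radial functions when $n=3$; for $n\ge 5$ the propagator kernels are not positive measures, so ``pointwise control'' of one propagator by another does not transfer to weighted norm bounds; and descent arguments do not apply to the nonlinear problem. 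The paper's identification sidesteps all of this because it is exact in every dimension and for all (not just radial) functions.
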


 As already mentioned, the spherical analysis on $\H^n$ is very similar to the one of $\mathbb R^n$. Here we provide a very simple geometric argument based on the fact that, on real hyperbolic space, the conformal Laplacian is conformally covariant and that $\H^n$ is conformal to $\R^n$. Of course, this argument does not work, as far as we know, for other rank one symmetric spaces of non-compact type, and even less on Damek-Ricci spaces (for which the spherical analysis is actually similar to the one of the hyperbolic space).

In the statement of Theorem \ref{GWPsmooth}, we assume the data to be smooth with compact support. As usual, with some more effort, we could relax the condition to admit less regular data.   Specifically, we have the following:

\begin{thm}\label{GWPgldata}
Under the same assumption as in  Theorems \ref{GWPsmooth}.
There exists a constant $\ep_2>0$ such that  \eqref{eq-wave-h}  with $\ep \in (0, \ep_2]$ admits  global solutions for any $(u_0, u_1)$, provided that
\beeq\label{eq-data-h}\|D^s u_0\|_{L^{(p+1)/p}(\H^n)}+
\|D^{s-1}u_1\|_{L^{(p+1)/p}(\H^n)}\le 1\ ,\eneq
where $s=(n+1)(\frac{1}{2}-\frac{1}{p+1})$.
\end{thm}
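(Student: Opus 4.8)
The plan is to solve \eqref{eq-wave-h} by a Banach fixed-point argument for its Duhamel formulation in a \emph{weighted} space-time Lebesgue space, taking as linear input the weighted Strichartz estimates of \cite{GLS} transported to $\H^n$ by exactly the conformal device that underlies Theorem \ref{GWPsmooth}. Recall that the Lorentzian product $(\R_t\times\H^n,\,-dt^2+h)$ is conformal, through Milne-type coordinates, to the interior of the forward light cone $\mathcal C^+=\{(s,y)\in\R^{1+n}:s>|y|\}$ with its flat metric: writing $\tau=\sqrt{s^2-|y|^2}$ one has $t=\log\tau$ and the flat metric equals $\tau^2(-dt^2+h)$. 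The point of the shift $\rho^2$ is that $\Box_{\H^n}=\pt^2+D_0^2=\pt^2-\Delta_h-\rho^2$ is precisely the conformally invariant (Yamabe) d'Alembertian of the product metric, since its scalar curvature is $-n(n-1)$ and $\tfrac{n-1}{4n}\,n(n-1)=\rho^2$. Conformal covariance in dimension $n+1$ then gives $\Box_{\H^n}u=\tau^{(n+3)/2}\,\Box\!\left(\tau^{-(n-1)/2}u\right)$ on $\mathcal C^+$, so $v:=\tau^{-(n-1)/2}u$ solves $\Box v=c\,\tau^{\frac{(n-1)p-(n+3)}{2}}|v|^{p-1}v$; the weight exponent vanishes exactly at $p=p_{\rm conf}=\frac{n+3}{n-1}$ (the scale-invariant case of \cite{Tataru}), and for $1<p<p_{\rm conf}$ it is negative and reproduces the Lorentz-invariant space-time weight $(s^2-|y|^2)$ of \cite{GLS}.

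With this dictionary in hand, the two linear estimates I would record are a homogeneous one, bounding $\|u\|_X$ by the right-hand side of \eqref{eq-data-h}, and a retarded (Duhamel) one, bounding $\|u\|_X$ by the dual weighted norm of the forcing, where $X$ is the weighted $L^{p+1}(\R\times\H^n)$ space corresponding under the conformal map to the \cite{GLS} solution space on $\mathcal C^+$. The role of the data norm \eqref{eq-data-h}, with the Bessel potentials $D^s$ and $D^{s-1}$ measured in $L^{(p+1)/p}(\H^n)$ and $s=(n+1)(\tfrac12-\tfrac1{p+1})$, is that it is exactly the quantity the homogeneous estimate consumes once the conformal factor is accounted for. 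A crucial structural feature is that $X$ carries \emph{no} derivatives on $u$: it is a plain weighted Lebesgue space, so the nonlinear step needs no fractional chain or Leibniz rule, which is what allows one to reach low-regularity data.

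The nonlinear estimate is then elementary. Since $F_p$ is homogeneous of degree $p$ with $|F_p(u)|\les|u|^p$, Hölder's inequality gives $\|F_p(u)\|_{X'}\les\|u\|_X^p$, where $X'$ is the dual weighted $L^{(p+1)/p}$ space on the forcing side of the retarded estimate; here it is essential that the weight powers produced by the conformal transformation match, which is precisely the computation above. Combining the two linear estimates, the solution map $\Phi(u)=u^{\mathrm{lin}}+\Box_{\H^n}^{-1}F_p(u)$ (with $u^{\mathrm{lin}}$ the free solution of data $(\ep u_0,\ep u_1)$) obeys $\|\Phi(u)\|_X\les\ep+\|u\|_X^p$, and, via $|F_p(u)-F_p(w)|\les(|u|^{p-1}+|w|^{p-1})|u-w|$, a matching difference bound. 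For $\ep\le\ep_2$ small, $\Phi$ preserves and contracts a small ball of $X$, so the contraction principle produces a fixed point; as $t$ ranges over all of $\R$, equivalently $\tau=e^{t}$ over $(0,\infty)$ and hence over the whole interior of $\mathcal C^+$, this is the desired global solution of \eqref{eq-wave-h}.

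The main obstacle is the weight bookkeeping in the linear step, not the nonlinear one. For the endpoint $p=p_{\rm conf}$ the estimate is scale invariant and one may quote \cite{Tataru} directly, but for subcritical $1<p<p_{\rm conf}$ one must invoke the genuinely weighted estimates of \cite{GLS} and check that the $\tau$-weight generated by the conformal map is an admissible one. I expect the most delicate point to be the identification of the hyperbolic data norm \eqref{eq-data-h} with the Euclidean data space: this requires understanding how the Bessel potentials $D^s$ on $\H^n$ correspond to their flat counterparts under the conformal change, and how the initial slice $\{t=0\}$, which is the unit hyperboloid $\{s^2-|y|^2=1\}$ rather than $\{s=0\}$, interacts with the fixed-time form of the \cite{GLS} estimate. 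Once these conformal identifications are pinned down, the remainder is the standard small-data contraction.
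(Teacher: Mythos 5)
Your overall skeleton---the conformal reduction of $\Box_{\H^n}$ to the flat d'Alembertian on the forward cone, the weighted Strichartz estimate of \cite{GLS}/\cite{Tataru} for the Duhamel term, and a small-data contraction in the weighted Lebesgue space $X$---is exactly the paper's setup, i.e.\ the proof of Theorem~\ref{GWPsmooth} in \S\ref{sec:Strau}, which \S\ref{sec:generaldata} reuses. Your conformal bookkeeping (the weight exponent $\tfrac{(n-1)p-(n+3)}{2}$, its vanishing at $p_{\rm conf}$, the H\"older step $\|F_p(u)\|_{X'}\lesssim\|u\|_X^p$) is correct and matches \eqref{eq-2.2}--\eqref{eq-2.5}. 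The problem is that none of this is the actual content of Theorem~\ref{GWPgldata}.

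The genuine gap is the homogeneous estimate, which is precisely what Theorem~\ref{GWPgldata} adds to Theorem~\ref{GWPsmooth}. You assert that \eqref{eq-data-h} ``is exactly the quantity the homogeneous estimate consumes once the conformal factor is accounted for,'' and you flag as delicate the identification of $D^s$ on $\H^n$ with flat Bessel potentials and of the initial hyperboloid $\{s^2-|y|^2=1\}$ with a fixed-time slice---but you never carry this out, and this route is not available as stated: the weighted estimates \eqref{eq-2.3} of \cite{GLS} and \cite{Tataru} are inhomogeneous (forcing-to-solution) bounds for solutions supported in the cone, with no companion ``fixed-time form'' taking data on the hyperboloid in $L^{(p+1)/p}$-based Sobolev norms, and fractional powers $D^s$ do not transform in any simple way under the conformal change. (In \cite{GLS}, and in the proof of Theorem~\ref{GWPsmooth}, the free part is instead controlled by the pointwise decay \eqref{eq-2.4}, which genuinely requires smooth compactly supported data---exactly the restriction this theorem is meant to remove.) The paper resolves this by going the other way: it never transforms the data, but instead rewrites the $X$-norm of the free wave in hyperbolic coordinates,
\begin{equation*}
\|w^{(0)}\|_X=\bigl\| e^{(2\gamma_1-\rho+\frac{n+1}{q})\tau}\,u^{(0)}\bigr\|_{L^q(d\tau\, dV_{\H^n})},
\qquad u^{(0)}=\ep \cos(\tau D_0)u_0+\ep D_0^{-1}\sin(\tau D_0)u_1,
\end{equation*}
and then invokes Tataru's dispersive estimates \emph{on} $\H^n$ (Lemma~\ref{thm-dispersive})---a different linear input whose right-hand side is exactly \eqref{eq-data-h}---together with the exponent check \eqref{eq-2.9}: the weight satisfies $2\gamma_1-\rho+\frac{n+1}{q}<(n-1)(\frac12-\frac1q)$ precisely when $p>1$, so the dispersive decay $(\sinh\tau)^{-(n-1)(1/2-1/q)}$ absorbs the exponential weight and the $\tau$-integral converges, yielding $\|w^{(0)}\|_X\le C\ep$ and restarting the iteration of \S\ref{sec:Strau}. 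Without this (or an equivalent) bound on the free evolution for rough, non-compactly-supported data, your contraction scheme has nothing to close on.
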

 As we have mentioned, the condition on the data 
 could be replaced by using the Strichartz estimates (see, e.g., \cite{APV12}) to conditions that 
 \beeq\label{eq-data3-h}\|
 D^{s/2-1/2}D_0^{1/2}
  u_0\|_{L^2(\H^n)}+
\| D^{s/2-1/2}D_0^{-1/2}u_1\|_{L^{2}(\H^n)}\le 1.\eneq
 Here, instead of directly using Strichartz estimates, we present a proof, based on
the dispersive estimates 
of Tataru \cite{Tataru}
for the linear homogeneous waves on hyperbolic space. See also
\eqref{eq-data4-h} for alternative conditions on the data.

In Theorems \ref{GWPsmooth} and 
\ref{GWPgldata}, we have restricted ourselves to the conformal or sub-conformal case, $p\le p_{\rm conf}$. As usual, the idea of proof could be further exploited to prove results for certain larger powers. 
We thank the referee for drawing our attention to \cite{MR3345662}, 
where global existence for certain super-conformal powers has been discussed.
 Here, as illustration, we present a stronger result in the following
\begin{thm}\label{thm-superconformal}
Let $p\in (p_{\rm conf}, 1+\frac{4n}{n^2-3n-2}]$ (which is understood to be $p\in (p_{\rm conf}, \infty)$ for $n=2,3$) and $s=
2-\frac{1}{n+1}+\de$, with  $\de\in (0, \frac 2{(n^{2}-1)(p-1)})$.
Then there exists a constant $\ep_3>0$ such that  \eqref{eq-wave-h}  with $\ep \in (0, \ep_3]$ admits  global solutions for any $(u_0, u_1)$, provided that
\beeq\label{eq-data-h2}\|D^s u_0\|_{L^{2/(1+2\de)}(\H^n)}+
\|D^{s-1}u_1\|_{L^{2/(1+2\de)}(\H^n)}\le 1.\eneq
\end{thm}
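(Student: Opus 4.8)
The plan is to solve \eqref{eq-wave-h} by the standard Duhamel/contraction scheme, feeding it with the sharp dispersive behavior of the shifted wave group on $\H^n$; the extra decay coming from negative curvature is exactly what lets us push past the conformal power. With $D_0=\sqrt{-\Delta_h-\rho^2}$ as above, I would recast \eqref{eq-wave-h} as the fixed-point equation
\[
u=\cos(tD_0)\,\ep u_0+\frac{\sin(tD_0)}{D_0}\,\ep u_1+\int_0^t\frac{\sin((t-\tau)D_0)}{D_0}\,F_p(u)(\tau)\,d\tau=:\Phi(u),
\]
and look for a fixed point of $\Phi$ in a ball of a single space-time norm $X$.

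The first step is the linear theory. Starting from the pointwise dispersive estimate of Tataru~\cite{Tataru} for $e^{itD_0}$ on $\H^3$ (and its higher-dimensional analogue in Anker--Pierfelice--Vallarino~\cite{APV12,MR3345662}), one interpolates the resulting fixed-time $L^{q'}\!\to L^{q}$ bounds against the $L^2$ conservation law and then runs the $TT^*$ argument together with the Christ--Kiselev lemma for the retarded term. Because the time decay is strictly better than on $\R^n$---in particular genuinely favorable at large times, owing to the spectral gap---this produces \emph{global}-in-time Strichartz estimates on a wider range of exponents than the Euclidean theory allows. The parameter $\de>0$ is there precisely to sit just below $L^2$: trading the integrability $2/(1+2\de)$ of the data for the additional time decay that a large power $p$ requires, while the height $s=2-\tfrac1{n+1}+\de$ (and the shift to $s-1$ on $u_1$, matching the half-wave normalization) is the admissible regularity for the chosen pair. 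Transferring the data norm \eqref{eq-data-h2} through this estimate places the free evolution, and the Duhamel integral, in $X$.

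The second step is the nonlinear bound $\|F_p(u)\|_{X'}\les\|u\|_X^p$ for the dual Duhamel norm $X'$. Choosing $X$ to be a plain space-time Lebesgue space $L^a_tL^b_x$---so that the regularity $s$ is consumed entirely by the linear Strichartz map and none of it falls on the rough nonlinearity---this reduces, by $p$-homogeneity of $F_p$, to H\"older's inequality in the form $\||u|^{p-1}u\|_{L^{a/p}_tL^{b/p}_x}=\|u\|_{L^a_tL^b_x}^{p}$, possibly preceded by a Sobolev embedding on $\H^n$ to reconcile the Strichartz exponents with $X'$. Combined with the linear estimate, $\Phi$ maps a small ball of $X$ into itself and is a contraction there once $\ep\le\ep_3$ is small enough, yielding the global solution.

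I expect the crux to be the bookkeeping that renders these two estimates compatible \emph{globally in time} across the entire super-conformal window, rather than any single inequality. Every constraint in the statement is forced at this stage: admissibility of the Strichartz pair together with the Sobolev embedding pin $s=2-\tfrac1{n+1}+\de$ against the exponent $2/(1+2\de)$; the demand that the time-integrated Duhamel term actually close the iteration caps the power at $p\le1+\tfrac{4n}{n^2-3n-2}$ (the cap becoming vacuous, i.e.\ $p<\infty$, exactly when $n^2-3n-2\le0$, namely $n=2,3$); and the residual slack in the decay budget is what confines $\de$ to $\bigl(0,\tfrac{2}{(n^2-1)(p-1)}\bigr)$. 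Negative curvature enters only through the large-time decay that makes the requisite Strichartz estimates hold globally in $t$; this is precisely the ingredient that upgrades the elementary local well-posedness to the asserted small-data global existence.
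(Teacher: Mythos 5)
Your plan breaks down at its central step, and not merely in bookkeeping: a contraction in a single plain norm $X=L^a_tL^b_x$, with the nonlinearity estimated only by H\"older (``none of the regularity falls on the rough nonlinearity''), cannot close for \emph{any} $p>p_{\rm conf}$, i.e.\ anywhere in the range of the theorem. Indeed, H\"older in both variables pins the Duhamel exponents to $(\tilde a',\tilde b')=(a/p,\,b/p)$, i.e.\ $\tfrac1{\tilde a}=1-\tfrac pa$, $\tfrac1{\tilde b}=1-\tfrac pb$. The global-in-time estimates produced by the $TT^*$/Christ--Kiselev machinery you invoke, applied to the dispersive bounds \eqref{eq-2.7}--\eqref{eq-2.8}, require \emph{both} the solution pair $(a,b)$ and the dual pair $(\tilde a,\tilde b)$ to be admissible, which at the natural regularity means $\tfrac2a\ge(n-1)\bigl(\tfrac12-\tfrac1b\bigr)$ with $a,b\ge2$ (this is exactly the condition under which the time convolution closes by Young/HLS; it is the admissible set of \cite{APV12,MR3345662} and appears in this paper as \eqref{eq-condi} and \eqref{eq-Strichartz3}). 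Inserting the H\"older relations into the two admissibility inequalities and eliminating $a$ gives
\begin{equation*}
(n-1)\Bigl(\frac pb-\frac12\Bigr)\ \le\ 2-\frac{2p}{a}\ \le\ 2-p(n-1)\Bigl(\frac12-\frac1b\Bigr)
\ \Longrightarrow\ \frac{(n-1)(p-1)}{2}\le 2,
\end{equation*}
i.e.\ $p\le p_{\rm conf}$, no matter how $b$ is chosen and no matter how much regularity $s$ you spend on the data side. Even if one grants homogeneous estimates for pairs outside this set (at higher, Knapp-level regularity), the requirement $\tilde b\ge2$ (i.e.\ $b\le 2p$; no dispersive gain is possible from $L^r$ with $r>2$) together with the fact that the total regularity cost of the two pairs cannot exceed the single derivative gained from $\sin((t-s)D_0)/D_0$ (which, by the Knapp lower bound $\sigma\ge\tfrac{n+1}2(\tfrac12-\tfrac1b)$, forces $b\ge\tfrac{(n+1)(p-1)}2$) caps the power at $p\le\tfrac{n+1}{n-3}$, still strictly below $1+\tfrac{4n}{n^2-3n-2}$ for every $n\ge4$. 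This matches the record: the derivative-free Strichartz route is precisely the one taken in \cite{MR3345662}, and it reaches only $p_1(n)<1+\tfrac{4n}{n^2-3n-2}$, as the Remark following the theorem statement points out. Relatedly, your diagnosis of the cap on $p$ as a time-integrability constraint on the Duhamel term is incorrect.

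The paper's proof instead lets one derivative fall on the nonlinearity. It fixes the spatial exponent at the conformal value $q=\tfrac{2(n+1)}{n-1}$, where Tataru's dual-pair dispersive bound \eqref{eq-2.7''} costs no derivatives, and uses the exponential time decay only to liberate the \emph{time} exponents: Young plus Hardy--Littlewood--Sobolev give \eqref{eq-Strichartz2} under $\tfrac1{p_0}+\tfrac1{p_1}\in[\tfrac{n-1}{n+1},1]$, and the choice $p_0=\tfrac{(n+1)(p-1)}2$, $p_1'=p_0/p$ sits exactly at the endpoint $\tfrac1{p_0}+\tfrac1{p_1}=\tfrac{n-1}{n+1}$ \emph{for every} $p$, so time is never the binding constraint. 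Since $D$ commutes with $\Box_{\H^n}$, applying \eqref{eq-Strichartz4} to $Du$ yields \eqref{eq-Strichartz6}, whose data norm is exactly \eqref{eq-data-h2} with $s=s_0+1=2-\tfrac1{n+1}+\de$; the iteration closes in the derivative-carrying norm $\|Du\|_{L^{p_0}_\tau L^q_x}$ via the chain rule and Sobolev embedding,
\begin{equation*}
\|DF_p(u)\|_{L^{q'}_x}\ \les\ \|Du\|_{L^q_x}\,\|u\|^{p-1}_{L^{(n+1)(p-1)/2}_x}\ \les\ \|Du\|^{p}_{L^q_x}.
\end{equation*}
It is the spatial embedding $\|u\|_{L^{(n+1)(p-1)/2}}\les\|Du\|_{L^q}$, i.e.\ the condition $1\ge\tfrac nq-\tfrac{2n}{(n+1)(p-1)}$, that produces the cap $p\le1+\tfrac{4n}{n^2-3n-2}$ (vacuous for $n=2,3$), while the restriction $\de<\tfrac{2}{(n^2-1)(p-1)}$ comes from the admissibility condition $\tfrac1{p_0}>(n-1)\bigl(\tfrac12-\tfrac1{r_0}\bigr)=(n-1)\de$ in the homogeneous estimate of Lemma~\ref{thm-homoStri}. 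In short: beyond the conformal power some regularity \emph{must} land on $F_p(u)$, and handling it by the chain rule is exactly the mechanism your proposal was designed to avoid.
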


\medskip
\noindent {\bf Remark.}
 In \cite{MR3345662}, global results were obtained for any $1<p<p_{1}(n)$, where
 $$p_{1}(n)=\left\{\begin{array}{ll }
5/2   &   n=4,\\
\frac{6+\sqrt{21}}5  &   n=5,\\
1+\frac{2}{(n-1)/2-1/(n-1)}      &   n\ge 6    .
\end{array}\right.$$
  For comparison, if $n\ge 4$, and let $p_2(n)=1+\frac{4n}{n^2-3n-2}$ be as in Theorem \ref{thm-superconformal}, then $p_{1}(n)<p_{2}(n)$, which means that our results improve those in 
  \cite{MR3345662}  somewhat.  
Moreover,
there appears to be gaps in the proof of the super-conformal result given there. 
 For example,
(49) on 
  \cite[page 751]{MR3345662} for the case 
$n=6$, $\gamma=2$ (which is $p$
in our notation), could not be satisfied with their choice of $q=14/3$ (see page 752, Case (D)), as 
$$\frac{\ga}2+\frac{n-5}{2(n-1)}-\frac{\ga}{q}=1+\frac{1}{10}-\frac{3}{7}>\frac{1}{2}\ .$$

\medskip

\subsubsection*{Outline} Our paper is organized as follows. In the next section, we
present the proof of global existence for Klein-Gordon type equations, Theorem \ref{thm-generic}, for fairly general manifolds. 
In \S\ref{sec:Strau}, we recall the relation between the wave equations on  hyperbolic space $\H^n$, and Euclidean space
 and
prove 
global existence results
for wave equations on  $\H^n$,
with $C_0^\infty$ data,
Theorem \ref{GWPsmooth}, by using 
 the space-time weighted Strichartz estimates of 
Georgiev-Lindblad-Sogge \cite{GLS} and Tataru \cite{Tataru}.
In 
\S\ref{sec:generaldata}
we prove Theorem~\ref{GWPgldata}, by
removing the restriction of compact  support and 
relaxing the regularity condition on the initial data imposed in Theorem \ref{GWPsmooth}. The idea is to exploit the dispersive estimates
of Tataru \cite{Tataru},
 for the linear homogeneous waves on hyperbolic spaces.
In addition, in \S\ref{sec:altprf}, an alternate proof of  Theorem \ref{GWPgldata} for 
 $p\in (1, 1+2/(n-1))$, as well as another global result involving different
 conditions on the data, \eqref{eq-data4-h}, 
  are obtained  after proving certain Strichartz type estimates. 
In \S\ref{sec:superconformal}, we present the proof of 
Theorem \ref{thm-superconformal}.
   Lastly, in an Appendix, we give
  an independent proof of dispersive estimates of Tataru~\cite{Tataru} for ${\mathbb H}^3$ and
  explain how there is an incorrect assertion in \cite{MT} that there is a gap in Tataru's argument.

\section{Global existence for Klein-Gordon type equations on manifolds}
In this section, we shall present the proof of global existence for Klein-Gordon type equations, Theorem \ref{thm-generic}.

First, though, let us present the Sobolev estimates that we shall require.
\begin{lem}\label{thm-Sobo}
Let $\|f\|_{H^1}=\|\sqrt{k-\Delta_g} u_0\|_{L^2(M)}$ be the natural Sobolev norm for the positive operator $k-\Delta_g$,  then we have the Sobolev estimates
\eqref{eq-Sobo}.
\end{lem}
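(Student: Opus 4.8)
The plan is to use the spectral gap to reduce the homogeneous estimate \eqref{eq-Sobo} to a non-homogeneous (``scale one'') Sobolev inequality that encodes the geometric hypotheses, and then to obtain the latter from comparison geometry. Write $P=k-\Delta_g$. For $f\in C_0^\infty(M)$ integration by parts gives $\|\sqrt{P}f\|_{L^2}^2=\|\nabla f\|_{L^2}^2+k\|f\|_{L^2}^2$, while the assumption ${\rm Spec}(-\Delta_g+k)\subset(c,\infty)$ means $\langle Pf,f\rangle\ge c\|f\|_{L^2}^2$, i.e. $\|f\|_{L^2}^2\le c^{-1}\|\sqrt{P}f\|_{L^2}^2$. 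Substituting this back yields
\[
\|\nabla f\|_{L^2}^2=\|\sqrt P f\|_{L^2}^2-k\|f\|_{L^2}^2\le\bigl(1+|k|/c\bigr)\|\sqrt P f\|_{L^2}^2 ,
\]
so that \emph{both} $\|\nabla f\|_{L^2}$ and $\|f\|_{L^2}$ are dominated by $\|\sqrt P f\|_{L^2}$. Hence it suffices to establish the endpoint non-homogeneous Sobolev inequality $\|f\|_{L^{2n/(n-2)}(M)}\les\bigl(\|\nabla f\|_{L^2}^2+\|f\|_{L^2}^2\bigr)^{1/2}$ for $n\ge 3$. Granting this, the intermediate range $2<q<2n/(n-2)$ follows by interpolating $\|f\|_{L^q}\le\|f\|_{L^2}^{1-\theta}\|f\|_{L^{2n/(n-2)}}^{\theta}$ and invoking $\|f\|_{L^2}\les\|\sqrt P f\|_{L^2}$, while $q=2$ is just the spectral-gap bound.

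The remaining, and genuinely substantive, step is this scale-one inequality, which is where the curvature and volume hypotheses enter. Since ${\rm Ric}_g\ge-(n-1)K$ for some $K\ge 0$, the Bishop--Gromov relative volume comparison (cf. \cite{Chavel}) yields a doubling property for geodesic balls of radius $\le 1$ with a constant depending only on $n$ and $K$; combined with Buser's inequality it produces a uniform local Neumann--Poincar\'e inequality on such balls. The non-collapsing hypothesis $\inf_{x}{\rm Vol}_g(B(x))>0$ makes the resulting local Sobolev constants and volume lower bounds uniform in $x$, and summing the local inequalities over a covering of $M$ by unit balls of uniformly bounded multiplicity (finite, again by volume comparison) gives the global non-homogeneous inequality. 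This passage is standard in the Saloff-Coste theory of Sobolev-type inequalities.

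Equivalently one may argue through the heat semigroup, which delivers \eqref{eq-Sobo} in one stroke. The same hypotheses give the short-time ultracontractive bound $\|e^{t\Delta_g}\|_{L^1\to L^\infty}\les t^{-n/2}$ for $0<t\le 1$ (Gaussian/Li--Yau upper bounds together with ${\rm Vol}_g(B(x,\sqrt t))\gtrsim t^{n/2}$ for $t\le1$, itself a consequence of volume comparison and non-collapsing), so $\|e^{-P}\|_{L^1\to L^\infty}<\infty$ since $e^{-tP}=e^{-tk}e^{t\Delta_g}$ and the factor $e^{-tk}$ is harmless for $t\le1$. For large times the spectral gap gives $\|e^{-tP}\|_{L^2\to L^2}\le e^{-ct}$, and factoring $e^{-tP}=e^{-P/2}\,e^{-(t-1)P}\,e^{-P/2}$ together with $\|e^{-P/2}\|_{L^1\to L^2}^2\le\|e^{-P}\|_{L^1\to L^\infty}<\infty$ yields $\|e^{-tP}\|_{L^1\to L^\infty}\les e^{-c(t-1)}$ for $t\ge1$. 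Thus $\|e^{-tP}\|_{L^1\to L^\infty}\les t^{-n/2}$ for all $t>0$, and by the Varopoulos equivalence between ultracontractivity and Sobolev embeddings this is precisely \eqref{eq-Sobo} at the endpoint $q=2n/(n-2)$. When $n=2$ the endpoint is dropped and one obtains $\|f\|_{L^q}\les\|\sqrt P f\|_{L^2}$ for every finite $q\ge2$ from $\|e^{-tP}\|_{L^1\to L^\infty}\les t^{-1}$ in the same manner.

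The main obstacle is thus entirely geometric: establishing the uniform short-time heat-kernel bound (equivalently, the uniform local Sobolev inequality) from ``Ricci bounded below plus unit-ball volume non-collapsing''. The delicate point is uniformity in $x\in M$ — one must extract doubling, Poincar\'e, and volume-lower-bound constants that do not degenerate as $x$ varies, which is exactly what relative volume comparison and the non-collapsing assumption supply. By contrast, the spectral-gap reduction and the interpolation are soft and routine.
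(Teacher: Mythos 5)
Your proposal is correct and takes essentially the same route as the paper: the spectral-gap reduction showing that $\|\sqrt{k-\Delta_g}\,f\|_{L^2}$ dominates both $\|f\|_{L^2}$ and $\|\sqrt{-\Delta_g}\,f\|_{L^2}$, followed by the standard non-homogeneous Sobolev embedding $H^1(M)\subset L^q(M)$ for complete manifolds with Ricci curvature bounded below and non-collapsing unit-ball volume. The only difference is that the paper simply cites Hebey \cite[Theorem 3.2]{MR1688256} for this second, geometric step (with a minor variant for $n=2$), whereas you re-derive that same result via standard comparison-geometry and heat-kernel machinery (Bishop--Gromov, Buser, Saloff-Coste, Li--Yau, Varopoulos), which is sound but amounts to reproving the cited theorem.
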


\begin{proof}
As $k-\Delta_g>0$, we know from spectral theorem that
$$\|f\|_{L^2(M)}+\|\sqrt{-\Delta_g} f\|_{L^2(M)}\le C\|f\|_{H^1(M)},$$
for some constant $C>0$. Here we see that the left hand side is just the standard $H^1$ norm on $(M,g)$, for which the standard Sobolev embedding is available,
for smooth complete manifolds with Ricci curvature bounded from below and $\inf_{x\in M}{\rm Vol}_g(B(x))>0$.
 See, e.g., Hebey \cite[Theorem 3.2]{MR1688256} for $n\ge 3$. When $n=2$, the result $H^1\subset L^q$ for any $q\in [2,\infty)$ could be derived from 
\cite[Theorem 3.2]{MR1688256} with $q=1$ 
using a similar argument in \cite[Lemma~2.1]{MR1688256}.
\end{proof}

\begin{proof}[Proof of Theorem~\ref{thm-generic}]
If we let
$$E(t)=\|u_t\|_{L^2}^2+\|u\|_{H^1}^2$$
be the energy functional, we see that, if $u_{tt}-\Delta_g u+ku=F$,  then
\begin{align*}
\frac{d}{dt}E(t)=2\<u_t, u_{tt}\>+2\<\sqrt{k-\Delta_g} u, \sqrt{k-\Delta_g}u_{t}\>
&=2\<u_{t}, u_{tt}-\Delta_g u+ku\>
\\
&=2\<u_{t}, F\>\le 2E^{1/2}\|F\|_{L^{2}}.
\end{align*}
This yields the natural energy estimates for $t\ge 0$
\beeq\label{eq-energy}E(t)^{1/2}\le E(0)^{1/2}+\int_{0}^{t} \|F(\tau)\|_{L^{2}}d\tau.\eneq

With help of the Sobolev embedding and energy estimates, we are able to prove local well-posedness in $C H^1\cap C^1 L^2$.
Observe that for any given $p\in (1, 1+2/(n-2))$, we know
from H\"older's inequality,
 the Sobolev embedding ($H^1\subset L^{2p}$) and \eqref{eq-Fp} that there exist constants $C_{1}$ and $C_{2}$
 such that \begin{align*}
 \|F_{p}(u)-&F_{p}(v)\|_{ L^{1}([0,T]; L^{2})}
 \\ &\le C_{1}T
 (\|u\|_{L^{\infty}([0,T]; L^{2p})}+\|v\|_{L^{\infty}([0,T]; L^{2p})})^{p-1}
 \|u-v\|_{L^{\infty}([0,T]; L^{2p})} 
 \\
&\le  C_{2}T ( \|u\|_{C([0,T]; H^{1})}^{p-1}+ \|v\|_{C([0,T]; H^{1})}^{p-1})
 \|u-v\|_{C([0,T]; H^{1})},
\end{align*}
for any
 $u, v\in C([0,T]; H^1)\cap C^1([0,T]; L^2)\subset L^{\infty}([0,T]; L^{2p})$.
Combined with \eqref{eq-energy}, a standard contraction mapping argument yields local well-posedness for \eqref{eq-nlw} in $C([0,T_{*}); H^1)\cap C^1([0,T_{*}); L^2)$, for some
$$T_{*}\ge \frac{ E(0)^{-(p-1)/2}}{2^{p+1}C_{2}}
\ge 
\frac{ \ep^{-(p-1)/2}}{2^{p+1}C_{2}},$$
where we have used the assumption \eqref{eq-data}.
Moreover, if $T_{*}$ is the maximal time of existence, with $T_{*}<\infty$, we have
$$\sup_{t\in [0, T_{*})} E(t)=\infty.$$

To prove the theorem, it remains to give a uniform a priori control on the energy of the solution, for small $\ep$.
Observe that the problem \eqref{eq-nlw} is Hamiltonian with the Hamiltonian functional given by
$$H[u(t), u_{t}(t)]=\int \left(\frac{u_t^2+|\sqrt{k-\Delta_g} u|^2}{2}-G_p(u)\right)dV_g,$$
where 
$G_p$ is the primitive function of $F_p$ with $G_p(0)=0$,
and $dV_{g}$ is the standard volume form for $(M,g)$.
Applying this fact to the solution $u\in C([0,T_{*}); H^1)\cap C^1([0,T_{*}); L^2)$ for \eqref{eq-nlw}, we see that
\beeq\label{eq-Hamilton}H[u(t), u_{t}(t)]= H[\ep u_{0}, \ep u_{1}]\le C_{3}\ep^{2}, \quad \forall t\in [0, T_{*}),\eneq
for some $C_{3}>0$ and any $\ep\le 1$.
Then we have
\begin{eqnarray*}E(t)&
=& 2H[u(t), u_{t}(t)]+2\int G_p(u)dV_g\\
&\le &
2H[u(t), u_{t}(t)]+C\|u\|_{L^{p+1}}^{p+1}\\
&\le& 2
H[u(t), u_{t}(t)]+\tilde C\|u(t)\|_{H^1}^{p+1}\\
&\le& 2C_{3}\ep^{2}+C_4E(t)^{(p+1)/2},
\end{eqnarray*}
where we have used the fact that $|G_p(u)|\le C |u|^{p+1}/(p+1)$,
by \eqref{eq-Fp}. Therefore, a continuity argument implies that
\beeq\label{eq-apriori} E(t)\le 4 C_{3}\ep^{2}, \quad \forall t\in [0, T_{*}),\eneq
as long as
\beeq\label{eq-datasize}\ep\le \ep_{0}:=(4C_{4})^{-1/(p-1)}(4C_{3})^{-1/2}.\eneq

In view of the local well-posed results, we see that \eqref{eq-apriori} is sufficient to conclude $T_{*}=\infty$ and so is the proof of global existence with $\ep\in (0, \ep_0]$, where $\ep_{0}$ is given by \eqref{eq-datasize}.
\end{proof}

\section{The Strauss conjecture on hyperbolic space}\label{sec:Strau}

In this section, we first recall the relation between the wave equations on the hyperbolic space-time $\H^n \times \R$, $n\ge 2$, and
 the wave equations on $\R^n \times \R$. With help of this fact,
we present the proof of Theorem \ref{GWPsmooth}, by using 
 the space-time weighted Strichartz estimates of 
Georgiev-Lindblad-Sogge \cite{GLS} and  Tataru \cite{Tataru}.

Recall that inside the forward light cone, $\La=\{(x, t)\in \R^n \times \R: t>|x|\}$, we may introduce 
coordinates
$$r=|x|, \, \, \, t=e^{\tau}\cosh s, \, \, \, r=e^{\tau}\sinh s,\, \,  \, s\in [0,\infty), \, \, \, \tau\in \R.$$
Here, with $\omega\in\Sp^{n-1}$, we may view $(s, \omega)$,  as  natural polar coordinates in  hyperbolic space $\H^n:=\La_{\tau=0}$, with the natural metric, $ds^2+
(\sinh s)^2 d\omega^2$,
 induced from the Minkowski metric $g=-dt^2+dx^2=
-dt^2+dr^2+r^2d\omega^2$ to $\H^n$. In the new coordinates,  a simple computation leads to
$$\Box=-\pt^2+\Delta 
=e^{-2\tau}(-\pa_\tau^2+\Delta_{\H^n}-(n-1)\pa_\tau)
=e^{-(2+\rho)\tau}(-\pa_\tau^2+\Delta_{\H^n}+\rho^2)e^{\rho \tau},
$$
with $\rho=(n-1)/2$.
That is, with 
$\Box_{\H^n}:=-\pa_\tau^2+\Delta_{\H^n}+\rho^2$, we have
\beeq\label{eq-1.1}\Box=
e^{-(2+\rho)\tau} \Box_{\H^n}e^{\rho \tau}.
\eneq

Let $u_0, u_1\in C^\infty_0(\H^n)$ 
and consider the Cauchy problem
\eqref{eq-wave-h} with $p>1$ and
  small data $(\ep u_{0}, \ep u_{1})$.
By \eqref{eq-1.1}, we know this problem is equivalent to solving, with $u=e^{\rho \tau} w$, 
\begin{multline}\label{eq-2.2}\Box w=e^{-(2+\rho)\tau} \Box_{\H^n}e^{\rho \tau} w = e^{-(2+\rho)\tau} \Box_{\H^n} u
=e^{-(2+\rho)\tau} F_{p}(u)
\\
=e^{-(2-\rho (p-1))\tau}F_{p}(w)
=(t^2-r^2)^{-\sigma }F_{p}(w)
\end{multline}
with  $C^\infty_0$ data of form $\ep(w_0, w_1)$ on $t=\sqrt{1+r^2}$,
where we have use the assumption that $F_{p}$ is homogeneous and
$$\sigma=1-\frac{\rho}2(p-1).$$

To solve the Cauchy problem \eqref{eq-2.2}, we recall two facts about wave equations.
The first is a weighted Strichartz estimates of
Tataru
 \cite[Theorem 5]{Tataru}. See also Georgiev-Lindblad-Sogge \cite[Theorem 1.2]{GLS} for an earlier version, which is sufficient to prove results for compactly supported data.
\begin{lem}[Weighted Strichartz estimates]
Let $n\ge 2$ and $w$ be a solution of the equation $\Box w =F$ which is supported inside the forward
light cone. Then the following estimate holds:
\beeq\label{eq-2.3}\|(t^2-r^2)^{\ga_1} w\|_{L^q(\R^{n+1})}\le C_{q,\ga_1,\ga_2}
\|(t^2-r^2)^{\ga_2} F\|_{L^{q'}(\R^{n+1})},\eneq
provided that $2\le  q\le   2(n + 1) /(n - 1)$ and
$$
\ga_1<\frac{n-1}{2}-\frac{n}{q},\ 
\ga_2=\ga_1-
\frac{n-1}{2}+\frac{n+1}{q}.$$
\end{lem}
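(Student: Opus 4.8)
The plan is to exploit the conformal identity \eqref{eq-1.1} to transport the estimate onto the hyperbolic space $\H^n$, where it turns into a fixed conformal Strichartz estimate provable by dispersion. Since $w$ is supported in the forward cone, I would pass to the coordinates $t=e^{\tau}\cosh s,\ r=e^{\tau}\sinh s$, in which $t^2-r^2=e^{2\tau}$ and $dt\,dx=e^{(n+1)\tau}\,d\tau\,dV_{\H^n}$, so that the whole cone is identified with $\R_\tau\times\H^n$. Setting $W=e^{\rho\tau}w$ and $G=e^{(2+\rho)\tau}F$, the identity \eqref{eq-1.1} converts $\Box w=F$ into $\Box_{\H^n}W=G$, while causality in $t$ becomes forward causality in $\tau$; hence $W$ is the retarded solution on $\R_\tau\times\H^n$.

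First I would do the bookkeeping of the weights. A direct change of variables, using $dt\,dx=e^{(n+1)\tau}\,d\tau\,dV_{\H^n}$ together with the prescribed relation $\ga_2=\ga_1-\frac{n-1}2+\frac{n+1}q$, shows that \emph{both} weighted norms collapse onto the \emph{same} exponential weight in $\tau$: with $\beta=\ga_1+\ga_2$,
\[
\|(t^2-r^2)^{\ga_1} w\|_{L^q(\R^{n+1})}=\|e^{\beta\tau}W\|_{L^q(\R_\tau\times\H^n)},\qquad \|(t^2-r^2)^{\ga_2} F\|_{L^{q'}(\R^{n+1})}=\|e^{\beta\tau}G\|_{L^{q'}(\R_\tau\times\H^n)}.
\]
Since $\Box_{\H^n}$ is invariant under $\tau$-translation, I may conjugate by $e^{\beta\tau}$: writing $\widetilde W=e^{\beta\tau}W$, $\widetilde G=e^{\beta\tau}G$, the claim reduces to the unweighted diagonal estimate $\|\widetilde W\|_{L^q(\R_\tau\times\H^n)}\les\|\widetilde G\|_{L^{q'}(\R_\tau\times\H^n)}$ for the shifted, $\beta$-damped wave operator $-(\partial_\tau-\beta)^2-D_0^2$ on $\H^n$, where $D_0=\sqrt{-\Delta_{\H^n}-\rho^2}$ has spectrum $[0,\infty)$.

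Next I would establish this diagonal estimate on $\H^n$ by the classical route. The retarded solution is a convolution in $\tau$ with the operator-valued kernel $\frac{\sin(\sigma D_0)}{D_0}$ (carrying the extra factors produced by the conjugation by $e^{\beta\tau}$). I would bound its $L^{q'}(\H^n)\to L^q(\H^n)$ operator norm using the dispersive estimates on $\H^n$, which give Euclidean-type $|\sigma|^{-(n-1)/2}$ decay at short times and genuine exponential decay at long times coming from the exponential volume growth of negative curvature; this is precisely the content of Tataru's estimates. Applying the Hardy--Littlewood--Sobolev inequality in $\tau$ to this convolution then yields the bound. The threshold $q\le 2(n+1)/(n-1)$ is exactly the admissibility cutoff forced by the $|\sigma|^{-(n-1)/2}$ rate, and the open condition $\ga_1<\frac{n-1}2-\frac nq$ translates into a strict inequality on $\beta$ that makes the $\tau$-integral against the dispersive kernel converge, i.e.\ places the shift strictly inside the admissible window.

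The step I expect to be the main obstacle is the dispersive endpoint on $\H^n$ combined with the effect of the damping $\beta$: one must obtain the short-time $|\sigma|^{-(n-1)/2}$ decay uniformly up to the edge of the cone (which corresponds to $\tau\to-\infty$, where the weight $e^{\beta\tau}$ must be controlled) while simultaneously exploiting the long-time exponential decay special to negative curvature, and one must check that the spectral shift built from $\rho^2$ and $\beta$ leaves the propagator bounds intact. This is where the curvature is genuinely used and where the sharpness of the exponent conditions appears. In the Euclidean formulation the same point is handled by Georgiev--Lindblad--Sogge through an analytic family of operators interpolating between the energy ($L^2\to L^2$) estimate and the fundamental-solution bound, and I would keep that analytic-interpolation scheme as the robust fallback should the explicit $\H^n$ kernel prove awkward to control near the cone.
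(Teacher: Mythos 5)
Your proposal is correct, but there is nothing in the paper to compare it against: the paper does not prove this lemma at all, it quotes it as Theorem 5 of Tataru~\cite{Tataru} (with Theorem 1.2 of Georgiev--Lindblad--Sogge~\cite{GLS} as an earlier version adequate for compactly supported data) and uses it as a black box. What you have written is, in outline, Tataru's own proof of that theorem, and your bookkeeping checks out: with $t^2-r^2=e^{2\tau}$, $dt\,dx=e^{(n+1)\tau}d\tau\,dV_{\H^n}$, $W=e^{\rho\tau}w$, $G=e^{(2+\rho)\tau}F$, both weighted norms acquire the common exponent $\beta=\ga_1+\ga_2=2\ga_1-\rho+\frac{n+1}{q}$ (the stated relation between $\ga_2$ and $\ga_1$ is exactly what makes the two exponents coincide), and the hypothesis $\ga_1<\frac{n-1}{2}-\frac{n}{q}$ is equivalent to $\beta<(n-1)(\frac{1}{2}-\frac{1}{q})$, which is what makes the $\tau$-convolution kernel $e^{\beta\sigma}K_q(\sigma)\chi_{\sigma>0}$, with $K_q$ as in \eqref{eq-5.2}, behave like $\sigma^{-(n-1)(1/2-1/q)}$ near zero (an admissible power since $q\le 2(n+1)/(n-1)$) and decay exponentially at infinity. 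Two points you elided would need a sentence in a full write-up: (i) Hardy--Littlewood--Sobolev is needed only at the endpoint $q=2(n+1)/(n-1)$; below it the kernel lies in $L^{q/2}(\R)$ and Young's inequality suffices, and at $q=2$ HLS is actually unavailable (the exponent $2/q=1$) but then $\beta<0$ makes the kernel integrable, so Young again applies; (ii) the dispersive bound \eqref{eq-2.7} carries the factor $D^{(n+1)(1/2-1/q)-1}$, whose order is $\le 0$ precisely when $q\le 2(n+1)/(n-1)$, so you must discard it using boundedness of $D^{-\alpha}$, $\alpha\ge 0$, on $L^{q'}(\H^n)$ --- true because ${\rm Spec}(D)\subset[\rho,\infty)$, and the same step the paper performs in \S\ref{sec:altprf}. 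As to what each route buys: Georgiev--Lindblad--Sogge prove \eqref{eq-2.3} directly in Minkowski space by analytic interpolation against the forward fundamental solution, entirely on the Euclidean side, whereas your (Tataru's) route derives it from Lemma~\ref{thm-dispersive} and so makes it logically downstream of the hyperbolic dispersive estimates. Inside this paper that has a curious consequence: \S\ref{sec:Strau} proves Theorem~\ref{GWPsmooth} by transporting the hyperbolic problem to the cone and invoking this lemma, so your proof, which transports back, collapses that argument into the direct hyperbolic-space argument of \S\ref{sec:generaldata}--\S\ref{sec:altprf}; this is not circular, since Lemma~\ref{thm-dispersive} is proved independently of \eqref{eq-2.3} (see \S\ref{appendix} for $n=3$), but it does mean the Minkowski detour becomes a repackaging rather than an independent input.
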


In addition, it is well-known that the solutions of the homogeneous wave equation with compactly supported smooth data, of size $\ep$, satisfy 
\beeq\label{eq-2.4}|w(t,x)|\les \ep (t^2-r^2)^{-\rho}.\eneq

With help of \eqref{eq-2.3} and \eqref{eq-2.4}, it is not hard to show that \eqref{eq-2.2} admits global solutions for any 
$p\in (1, p_{\rm conf}]$.
Actually, by setting $q=p+1$ and $\ga_2=-\ga_1=\frac{\sigma}{p+1}$,
such that we have
$$
\ga_{2}-\sigma=\ga_{1}p,\  \ga_{2}=\ga_{1}-
\frac{n-1}{2}+\frac{n+1}{q},$$
 we can solve \eqref{eq-2.2} by iteration. Let $w^{(-1)}=0$, we define inductively 
\beeq\label{eq-2.5}\Box w^{(m)}
=(t^2-r^2)^{-\sigma }|w^{(m-1)}|^p, \quad m=1,2,\dots,
\eneq
with  $C^\infty_0$ data
$\ep(w_0, w_1)$
 on $t=\sqrt{1+r^2}$. 
 
Let
$\|w\|_X:=\|(t^2-r^2)^{\ga_1} w\|_{L^q(t>\sqrt{1+r^2})}$, then, by a routine calculation, we see from
 \eqref{eq-2.4}
that
\begin{equation}\label{3.66}\|w^{(0)}\|_X\le C_0 \ep.
\end{equation}
As a result by  \eqref{eq-2.3}, for $m\ge 1$ we have
$$\|w^{(m)}\|_X\le 
\|w^{(0)}\|_X+\|w^{(m)}-w^{(0)}\|_X
\le
C_0 \ep
+C_1\|w^{(m-1)}\|_X^p.$$
Based on these estimates, a standard continuity argument ensures that
$$\|w^{(m)}\|_X
\le
2 C_0 \ep,$$
provided $\ep\le \ep_0$ with $\ep_0\ll 1$.
Moreover,
with a possibly smaller $\ep_1\ll 1$, we have the
convergence of $w^{(m)}$  in $X$, which proves the global existence of weak solutions for
 \eqref{eq-2.2},
with
 sufficiently small data of size $\ep\le \ep_1$.
This completes the proof of Theorem \ref{GWPsmooth}.

\section{General data: proof of Theorem \ref{GWPgldata}}\label{sec:generaldata}

In this section,
we present a proof
of the Strauss conjecture on hyperbolic spaces with general data,
 Theorem \ref{GWPgldata}, based  on
the dispersive estimates  of Tataru \cite{Tataru}
for the linear homogeneous wave equation
on hyperbolic spaces. 
In addition, an alternative proof of  Theorem \ref{GWPgldata} for 
 $p\in (1, 1+2/(n-1))$ will be presented in
\S \ref{sec:altprf}, by proving certain Strichartz type estimates.

From the proof of global results in Section \ref{sec:Strau}, we see that we need only to ensure the first iteration $w^{(0)}\in X$, for general data.
To achieve this goal, we would like to translate it back to  hyperbolic space.

Observing that
\begin{eqnarray*}
\|(t^2-r^2)^{\ga_1} w\|_{L^q(\La)}^q & = & \int (t^2-r^2)^{\ga_1 q}|w|^q dt dx\\
 & = & \int e^{2\ga_1 q\tau}|w|^q e^{(n+1)\tau}d\tau dV_{\H^n}\\
  & = & \int e^{(2\ga_1-\rho) q\tau}|u|^q e^{(n+1)\tau}d\tau dV_{\H^n}\\
    & = & \| e^{(2\ga_1-\rho+\frac{n+1}{q}) \tau}u  \|_{L^q(d\tau dV_{\H^n})}^q
,
\end{eqnarray*}
we see that what we need is to find an estimate for
\beeq\label{eq-2.6}
 \| e^{(2\ga_1-\rho+\frac{n+1}{q}) \tau}u^{(0)}  \|_{L^q(d\tau dV_{\H^n})},
\eneq
 where
$\Box_{\H^n} u^{(0)}= 0$ with
data
$\ep (u_0, u_1 )$
 on $\tau=0$.
 Recall that
 $$ u^{(0)}=\ep C(\tau)u_0+\ep S(\tau)u_1,$$
where  $S(\tau)=D_0^{-1}\sin (\tau D_0)$,
  $C(\tau)=\cos (\tau D_0)$, with $D_0=\sqrt{-\Delta_{\H^n}-\rho^2}$.

 To control \eqref{eq-2.6}, we recall the following dispersive estimate of
Tataru
 \cite[Theorem 3]{Tataru}: 
\begin{lem}[Dispersive estimates]\label{thm-dispersive}
Let $D=\sqrt{-\Delta_{\H^n}}$. Then the following estimate holds:
\beeq\label{eq-2.7}
\|S(\tau)f\|_{L^q}\les 
\frac{(1+\tau)^{\frac 2q}}{(\sinh \tau)^{(n-1) (\frac{1}{2}-\frac{1}{q})}}
\|D^{(n+1)(\frac{1}{2}-\frac{1}{q})-1}f\|_{L^{q'}},  \quad 2\le q<\infty,
\eneq
\beeq\label{eq-2.8}
\|C(\tau)f\|_{L^q}\les 
\frac{1}{(\sinh \tau)^{(n-1) (\frac{1}{2}-\frac{1}{q})}}
\|D^{(n+1)(\frac{1}{2}-\frac{1}{q})}f\|_{L^{q'}},  \quad 2\le q<\infty.
\eneq
\end{lem}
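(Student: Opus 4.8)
The plan is to derive both fixed-time estimates from a single Stein analytic-interpolation scheme, interpolating between the trivial energy estimate at $q=2$ and a pointwise kernel (that is, $L^1\to L^\infty$) estimate at the formal endpoint $q=\infty$. Writing $\theta=2(\tfrac12-\tfrac1q)\in[0,1)$, every exponent occurring in \eqref{eq-2.7}--\eqref{eq-2.8} is affine in $\theta$: the power of $\sinh\tau$ runs from $0$ to $-(n-1)/2$, the power of $D$ from $-1$ to $(n-1)/2$ (from $0$ to $(n+1)/2$ for $C(\tau)$), and the temporal factor $(1+\tau)^{2/q}=(1+\tau)^{1-\theta}$ from $(1+\tau)$ down to $1$. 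Accordingly I would introduce, for $0\le\mathrm{Re}\,z\le1$, the analytic family
\beeq T_z=e^{z^2}\,(\sinh\tau)^{\frac{n-1}{2}z}(1+\tau)^{z-1}\,D^{1-\frac{n+1}{2}z}\,S(\tau),\eneq
together with the analogous family for $C(\tau)$ (no $(1+\tau)$ factor, and $D$-power $-\frac{n+1}{2}z$); the Gaussian $e^{z^2}$ supplies admissible decay in $\mathrm{Im}\,z$ on the two boundary lines. It then suffices to prove the two endpoint bounds and quote Stein interpolation.

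The endpoint $\mathrm{Re}\,z=0$ is the energy estimate and follows at once from the spectral theorem for $-\Delta_{\H^n}$, equivalently from Plancherel for the spherical transform: $S(\tau)$ and $C(\tau)$ act as the multipliers $\sin(\tau\lambda_0)/\lambda_0$ and $\cos(\tau\lambda_0)$ in the spectral variable $\lambda_0=\sqrt{\lambda^2-\rho^2}$, where $\lambda\in[\rho,\infty)=\mathrm{Spec}\,D$. For $C(\tau)$ one has $|\cos(\tau\lambda_0)|\le1$, giving $\|C(\tau)f\|_{L^2}\les\|f\|_{L^2}$ with no temporal factor. For $S(\tau)$ the multiplier is only $O(\min(\tau,\lambda_0^{-1}))$, which near the bottom of the spectrum $\lambda_0=0$ behaves like $\tau$; comparing with $D^{-1}$, whose symbol is $(\lambda_0^2+\rho^2)^{-1/2}$, one checks $|\sin(\tau\lambda_0)/\lambda_0|\les(1+\tau)(\lambda_0^2+\rho^2)^{-1/2}$ for all $\lambda_0\ge0$. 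This is exactly why the factor $(1+\tau)$ is forced for $S(\tau)$ but absent for $C(\tau)$.

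The substance is the endpoint $\mathrm{Re}\,z=1$, which reduces to a uniform pointwise bound on a convolution kernel. By the radial symmetry of $\H^n$ the kernel of $D^{-(n-1)/2}S(\tau)$ is a function $K_\tau(d(x,y))$ of the geodesic distance alone, supported in $\{d\le\tau\}$ by finite propagation speed, and the required estimate is
\beeq |K_\tau(r)|\les(\sinh\tau)^{-\frac{n-1}{2}},\qquad 0<r\le\tau.\eneq
The exponent $(n-1)/2$ is precisely half of the one in the volume growth $\mathrm{Vol}(\partial B_r)\sim(\sinh r)^{n-1}$ of geodesic spheres, so this bound encodes the enhanced dispersion coming from negative curvature. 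I would estimate $K_\tau$ from its representation as an inverse spherical transform, realizing the inserted Bessel potential $D^{-(n-1)/2}$ by subordination so that only pointwise bounds for Bessel-potential kernels on $\H^n$ are needed. In odd dimensions, and most transparently for $n=3$, the shift operator conjugating $\Box_{\H^n}$ to a one-dimensional wave operator produces an explicit Huygens-type kernel, and composing it with the known decay of the Bessel potential yields the bound directly; this is the self-contained $\H^3$ argument in the Appendix. In even dimensions one instead controls the spherical functions through their stationary-phase asymptotics, treating separately the region near the light cone $r\approx\tau$ and the interior.

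I expect this last step to be the principal obstacle: obtaining the sharp kernel bound \emph{uniformly} in $\tau$, capturing simultaneously the exponential volume-decay factor $(\sinh\tau)^{-(n-1)/2}$ and the correct Bessel-potential power, while controlling the oscillation near the light cone where the phase is stationary. The whole point of routing $D^{-(n-1)/2}$ through Bessel potentials is to replace Tataru's reliance on the Harish-Chandra $c$-function by elementary pointwise estimates. Once both endpoints are established, Stein's interpolation theorem applied to $T_z$ delivers \eqref{eq-2.7} and \eqref{eq-2.8} for all $2\le q<\infty$.
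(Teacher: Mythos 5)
Your interpolation scheme---Stein analytic interpolation between an $L^2$ spectral-theorem bound and an $L^1\to L^\infty$ kernel bound, with the powers of $D$, $\sinh\tau$ and $(1+\tau)$ affine in $z$, and Bessel potentials used to realize the $D$-powers---is the same strategy as the paper's Appendix (and Tataru's original proof), and your $L^2$ endpoint and exponent bookkeeping are correct. But your kernel endpoint is not merely ``the principal obstacle'': as stated it is \emph{false}, and it is exactly the false statement that the Appendix was written to expose. Your family $T_z=e^{z^2}(\sinh\tau)^{\frac{n-1}{2}z}(1+\tau)^{z-1}D^{1-\frac{n+1}{2}z}S(\tau)$ carries only the factor $e^{z^2}$, which decays in $\mathrm{Im}\,z$ but vanishes nowhere on the line $\mathrm{Re}\,z=1$; so Stein interpolation forces you to prove the bound at $z=1$ itself, i.e.\ that the kernel of $D^{-(n-1)/2}S(\tau)$ is $O\bigl((\sinh\tau)^{-(n-1)/2}\bigr)$. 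For $n=3$ this operator is precisely $(D_0^2+\rho^2)^{-1/2}\sin(\tau D_0)/D_0$, the operator in \eqref{a.10} with $z=-1$, $\beta=\rho$, and it does \emph{not} map $L^1(\H^3)\to L^\infty(\H^3)$: by the radial-kernel formula its kernel is, up to a constant, $\frac{1}{\sinh r}\bigl(K_0(|r-\tau|)-K_0(r+\tau)\bigr)$, and the modified Bessel function $K_0$ (the order-one Bessel potential in $\R$) has a logarithmic singularity, so the kernel blows up logarithmically on the light cone $r=\tau$. Your claimed pointwise bound $|K_\tau(r)|\les(\sinh\tau)^{-(n-1)/2}$ is exactly the incorrect inequality (3.26) of Metcalfe--Taylor \cite{MT} whose attribution to Tataru created the ``gap'' dispute. (A smaller slip in the same step: $D^{-(n-1)/2}S(\tau)$ does \emph{not} have kernel supported in $\{d\le\tau\}$; the Bessel potential is nonlocal, so finite propagation speed is lost after composition.)

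The missing idea is a holomorphic damping factor that \emph{vanishes} at the bad point of the endpoint line: Tataru takes $a(z)=e^{z^2}/\Gamma(z+\rho)$ as in \eqref{a.9}, the paper takes $(z+1)e^{z^2}$ as in \eqref{a.1}. With such a factor, the endpoint kernel bound is proved \emph{uniformly along the whole line} $\mathrm{Re}\,z=-1$ (in the paper's normalization): for $\mathrm{Im}\,z=s\neq0$ the order-$(1-is)$ Bessel kernel is bounded but its bound degenerates as $s\to0$, and the zero of $(z+1)$ at $s=0$ absorbs exactly this degeneration (this is the content of Lemma~\ref{lemmaa}); at $s=0$ the operator is identically zero, so nothing needs to be proved there. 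Without the vanishing factor, the operator norm on your endpoint line is infinite at the single point $\mathrm{Im}\,z=0$ and the interpolation hypothesis fails. Note also that the damping is genuinely needed only for the sine family: for $C(\tau)$ the endpoint involves a Bessel potential of order $(n+1)/2>1$, whose kernel is bounded, which is why the paper remarks after \eqref{3d} that the $(z+1)$ factor in $H_z$ is superfluous. Once you insert a factor vanishing at the endpoint and prove the kernel bound uniformly on the line (including the separate treatment of the region $0<r<1$, $2r<t$, where an integration-by-parts argument is needed), your argument becomes essentially the paper's proof of Proposition~\ref{propa}.
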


If we recall that
$\ga_1=-\sigma/q$, $q=p+1$,
$\sigma=1-\rho(p-1)/2$, $\rho=(n-1)/2$, 
it is easy to check that \beeq\label{eq-2.9} 2\ga_1-\rho+\frac{n+1}{q}<
(n-1) \left(\frac{1}{2}-\frac{1}{q}\right)
\Leftrightarrow
\ga_1+\frac{n}{p+1}<\rho\Leftrightarrow
p>1
. 
\eneq 
Consequently we see from Lemma~\ref{thm-dispersive} that for any $q=p+1>2$ there exists a constant $C>0$ such that 
$$\|w^{(0)}\|_X=\| e^{(2\ga_1-\rho+\frac{n+1}{q}) \tau}u^{(0)}  \|_{L^q(d\tau dV_{\H^n})}
\le C \ep
(\|D^su_0\|_{L^{\frac{p+1}p}}+
\|D^{s-1}u_1\|_{L^{\frac{p+1}p}}),
$$
with $s=(n+1)(\frac{1}{2}-\frac{1}{p+1})$.
Since this is \eqref{3.66}, we obtain Theorem \ref{GWPgldata} as before.

\medskip
\noindent  {\bf Remark.}  It was pointed out by the referee that Metcalfe and Taylor~\cite{MT} assert that when 
$n=3$ the proof in Tataru~\cite{Tataru} of \eqref{eq-2.7}--\eqref{eq-2.8} has a gap.  As a result, we felt the
need to present an independent proof of these dispersive estimates for ${\mathbb H}^3$.  Our proof makes use
of elementary properties of Bessel potentials.  Unlike the proof in \cite{Tataru}, it is a bit more self-contained in the sense that it does not rely on the use of the Harish-Chandra $c$-function.

 The claim in \cite{MT} rests on the fact that Bessel functions
of order one have a logarithmic singularity at the origin (and blow up logarithmically at infinity).  We shall present our proof of Tataru's dispersive estimates  \eqref{eq-2.7}--\eqref{eq-2.8}  for $n=3$ in such a way that we can highlight the oversight in \cite{MT} which lead the authors to make their incorrect assertion about this ``gap''.  We thank the referee for bringing this issue to our attention so that we may hopefully settle this minor simple misunderstanding
and use these dispersive estimates.  We are also grateful to Michael Taylor for helpful comments.

\section{Strichartz type estimates and an alternative proof of global existence for the shifted wave equation on 
$\H^n$}\label{sec:altprf}
As a side remark, let us now show how we could  use Lemma \ref{thm-dispersive} to prove  inhomogeneous Strichartz type estimates that 
 are sufficient to 
 give an alternative proof of  Theorem \ref{GWPgldata} for $p\in (1, 1+2/(n-1))$.

First, let us observe that when
 $p\in (1, p_{\rm conf}]$ we have $s\le 1$, and so
 \beeq\label{eq-2.7'}
\|S(\tau)f\|_{L^{p+1}(\H^n)}\les 
\, K_{p+1}(\tau)\|f\|_{L^{\frac{p+1}p}(\H^n)},
\eneq
by \eqref{eq-2.7},
where \beeq\label{eq-5.2} K_q(\tau)=\frac{(1+\tau)^{\frac 2q}}{(\sinh \tau)^{(n-1) (\frac{1}{2}-\frac{1}{q})}}.\eneq

Based on  Duhamel's principle and \eqref{eq-2.7'}, we see that 
$$\|u(\tau)\|_{L^{p+1}(\H^n)}\le C
\int_{0}^{\tau}
K_{p+1}(\tau-s)
\|F(s)\|_{L^{\frac{p+1}p}(\H^n)}d s$$
for  solutions to
$\Box_{\H^n} u=F$ with vanishing data at $\tau=0$.
Since 
$$K_{p+1}(\tau)\chi_{\tau>0}\les |\tau|^{-2/(p+1)}, \quad 1<p\le p_{\rm conf},$$ 
we obtain the inhomogeneous Strichartz estimates 
\beeq\label{eq-Strichartz}\|u\|_{L^{p+1}(\R^+\times\H^n)}\le C
\|F\|_{L^{(p+1)/p}(\R^+\times\H^n)},\ 1<p\le p_{\rm conf},
\eneq
 by the Hardy-Littlewood-Sobolev inequality. 

Concerning the homogeneous solutions, we  observe that
if $$(n-1) \left(\frac{1}{2}-\frac{1}{p+1}\right)<\frac 1{p+1},$$
that is, $1<p< 1+\frac{2}{n-1}$, we have
$K_{p+1}(\tau)\in L^{p+1}(\R^+)$ and so
we have the homogeneous estimates 
$$\|u\|_{L^{p+1}(\R^+\times\H^n)}\les
\|D^s u(0)\|_{L^{(p+1)/p}(\H^n)}+
\|D^{s-1}u_\tau (0)\|_{L^{(p+1)/p}(\H^n)}
$$
for any solutions to 
$\Box_{\H^n} u=0$,
in view of Lemma \ref{thm-dispersive},
where $s=(n+1)(\frac{1}{2}-\frac{1}{p+1})$.

For general $p$,
the argument still works, if we impose  other conditions on the data.
To state these we require the following homogeneous estimates.
\begin{lem}[Homogeneous estimates]\label{thm-homoStri}
Let $q\in [1,\infty)$ and $r\in (2,\infty)$, then for any 
$r_0\in (2, r]$ such that 
\beeq\label{eq-condi}\frac 1q>(n-1)\left(\frac 12-\frac 1{r_0}\right),\eneq
we have
\beeq\label{eq-homoStri}\|u\|_{L^q([0,\infty); L^r(\H^n))}\les 
\|D^{s_0} u(0)\|_{L^{r_0'}(\H^n)}+
\|D^{s_0-1}u_\tau (0)\|_{L^{r_0'}(\H^n)},\eneq
if
$\Box_{\H^n} u=0$ and $s_0=\frac{n+1}{2}-\frac{1}{r_0}-\frac{n}{r}$.
In particular, for any $p\in (1, \infty)$, 
 we have
\beeq\label{eq-homoStri2}\|u\|_{L^{p+1}(\R^+\times\H^n)}\les 
\|D^{s_1} u(0)\|_{L^{\frac{2}{1+2\de}}(\H^n)}+
\|D^{s_1-1} u_\tau(0)\|_{L^{\frac{2}{1+2\de}}(\H^n)}\eneq
with $s_1=n(\frac 12-\frac 1{p+1})+\de$, for any $\de>0$ sufficiently small such that
$\de<\frac{1}{(n-1)(p+1)}$ and $\de\le \frac 12-\frac 1{p+1}$.
\end{lem}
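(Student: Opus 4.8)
The plan is to deduce the homogeneous estimate \eqref{eq-homoStri} directly from the dispersive bounds of Lemma~\ref{thm-dispersive}, treating it as a $TT^*$-type (or, more elementarily, a direct norm-in-time) estimate on the propagators $S(\tau)$ and $C(\tau)$. Writing a solution of $\Box_{\H^n}u=0$ as $u(\tau)=C(\tau)u(0)+S(\tau)u_\tau(0)$, it suffices by the triangle inequality to bound $\|C(\tau)f\|_{L^q([0,\infty);L^r)}$ and $\|S(\tau)g\|_{L^q([0,\infty);L^r)}$ separately. First I would apply \eqref{eq-2.8} with $q$ replaced by the spatial exponent $r$ and $f$ replaced by $D^{s_0}f$, so that the required Sobolev order is exactly $s_0=\frac{n+1}{2}-\frac1{r_0}-\frac n{r}$ once one rewrites $(n+1)(\tfrac12-\tfrac1r)$ in terms of the dual exponent $r_0'$; matching the Bessel-potential exponents is a routine bookkeeping step. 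This reduces the claim to the single scalar integrability statement that the dispersive kernel
$$K_r(\tau)=\frac{(1+\tau)^{2/r}}{(\sinh\tau)^{(n-1)(1/2-1/r)}}$$
lies in $L^q((0,\infty))$ under the hypothesis \eqref{eq-condi}.

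The heart of the matter is therefore this $L^q$-integrability of $K_r$, which I would verify by splitting the $\tau$-integral at $\tau=1$. Near $\tau=0$ one has $\sinh\tau\sim\tau$ and $(1+\tau)^{2/r}\sim1$, so $K_r(\tau)^q\sim \tau^{-q(n-1)(1/2-1/r)}$, and this is integrable at the origin precisely when $q(n-1)(\tfrac12-\tfrac1r)<1$; since $r_0\le r$ forces $(n-1)(\tfrac12-\tfrac1r)\le(n-1)(\tfrac12-\tfrac1{r_0})<\tfrac1q$ by \eqref{eq-condi}, the small-$\tau$ piece converges. For $\tau\ge1$ the factor $(\sinh\tau)^{-(n-1)(1/2-1/r)}$ decays exponentially while $(1+\tau)^{2/r}$ grows only polynomially, so the tail is harmless and the large-$\tau$ integral is finite with no further constraint. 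Combining the two ranges gives $K_r\in L^q$, hence \eqref{eq-homoStri}.

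For the particular case \eqref{eq-homoStri2} I would specialize $r=p+1$ and choose the dual exponent so that $r_0'=\frac{2}{1+2\de}$, i.e. $r_0=\frac{2}{1-2\de}$, and then simply check that the two smallness conditions on $\de$ are exactly what is needed to run the general estimate: the requirement $\de<\frac1{(n-1)(p+1)}$ is equivalent to \eqref{eq-condi} with $q=p+1$ after substituting this $r_0$, while $\de\le\frac12-\frac1{p+1}$ guarantees $r_0\le r=p+1$ so that the interpolation hypothesis $r_0\in(2,r]$ holds; a short computation then identifies $s_0$ with $s_1=n(\tfrac12-\tfrac1{p+1})+\de$. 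The one point requiring genuine care—rather than bookkeeping—is the main obstacle: ensuring the endpoint behavior of \eqref{eq-condi} is \emph{strict}, since the borderline case $\frac1q=(n-1)(\tfrac12-\tfrac1{r_0})$ produces a kernel that is only logarithmically divergent at $\tau=0$ and fails to be in $L^q$, which is precisely why the hypothesis is stated as a strict inequality and why $\de$ must be taken strictly positive but strictly below the stated threshold.
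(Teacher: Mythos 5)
Your reduction applies the dispersive estimates at the wrong spatial exponent, and this breaks the argument in two places at once. First, Lemma~\ref{thm-dispersive} is a \emph{dual-pair} estimate: applied with spatial exponent $r$ it bounds $\|C(\tau)f\|_{L^r}$ by $K_r(\tau)\,\|D^{(n+1)(\frac12-\frac1r)}f\|_{L^{r'}}$, so the data necessarily come out in $L^{r'}$, not in $L^{r_0'}$. Converting that into $\|D^{s_0}f\|_{L^{r_0'}}$ is not ``routine bookkeeping'': since $r_0\le r$ we have $r_0'\ge r'$, so you would need an inequality of the form $\|g\|_{L^{r'}}\lesssim\|D^a g\|_{L^{r_0'}}$ that \emph{lowers} the Lebesgue exponent, which is not a Sobolev embedding and is not available from anything in the paper (on the infinite-volume space $\H^n$, adding derivatives does not let you pass from a larger to a smaller exponent). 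Second, and independently fatal, the scalar statement you reduce to is false: \eqref{eq-condi} does not imply $K_r\in L^q(0,\infty)$. Your verification rests on the claim that $r_0\le r$ forces $(n-1)(\tfrac12-\tfrac1r)\le(n-1)(\tfrac12-\tfrac1{r_0})$, which is backwards; $r_0\le r$ gives $\tfrac12-\tfrac1{r_0}\le\tfrac12-\tfrac1r$, so \eqref{eq-condi} controls the \emph{smaller} of the two exponents. Concretely, take $n=3$, $q=2$, $r_0=21/10$, $r=100$: then \eqref{eq-condi} reads $\tfrac12>\tfrac1{21}$ and holds, yet $q(n-1)(\tfrac12-\tfrac1r)=\tfrac{49}{25}>1$, so $K_r^q\sim\tau^{-49/25}$ is not integrable near $\tau=0$. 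Since the lemma is nonetheless true in this case, your reduction target cannot be equivalent to it; the dispersive decay at exponent $r$ is simply too singular in $\tau$ near $0$.

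The repair is to reverse the order of the two steps, which is exactly what the paper does: first apply the Sobolev embedding on the \emph{solution} side, $\|u(\tau)\|_{L^r(\H^n)}\lesssim\|D^{s_2}u(\tau)\|_{L^{r_0}(\H^n)}$ with $s_2=n(\tfrac1{r_0}-\tfrac1r)\ge0$ (this raises integrability from $r_0$ to $r$, the legitimate direction), and only then apply Lemma~\ref{thm-dispersive} at the dual pair $(r_0',r_0)$ to $D^{s_2}u(\tau)$. This produces the kernel $K_{r_0}$, whose membership in $L^q$ is precisely what \eqref{eq-condi} guarantees — your splitting of the $\tau$-integral at $\tau=1$, with $\sinh\tau\sim\tau$ near the origin and exponential decay at infinity thanks to $r_0>2$, is then correct verbatim with $r$ replaced by $r_0$ — and the data come out in $L^{r_0'}$ with order $s_0=s_2+(n+1)(\tfrac12-\tfrac1{r_0})=\tfrac{n+1}2-\tfrac1{r_0}-\tfrac nr$, as claimed. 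Your final paragraph (the choice $r_0=\tfrac2{1-2\de}$, the equivalence of $\de<\tfrac1{(n-1)(p+1)}$ with \eqref{eq-condi}, the equivalence of $\de\le\tfrac12-\tfrac1{p+1}$ with $r_0\le p+1$, and the identification $s_0=s_1$) is correct and matches the paper's specialization.
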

To  prove this lemma, we need only to prove \eqref{eq-homoStri}.
By 
Sobolev embedding and
Lemma~\ref{thm-dispersive}, we have
$$
\|u(\tau)\|_{L^{r}(\H^n)}\les
\|D^{s_2}u(\tau)\|_{L^{r_0}(\H^n)}\les K_{r_0}(\tau)(
\|D^{s_0} u(0)\|_{L^{r_0'}(\H^n)}+
\|D^{s_0-1}u_\tau (0)\|_{L^{r_0'}(\H^n)}),
$$
with  $s_2=n(1/r_0-1/r)$
and $s_0-s_2=(n+1)(1/2-1/r_0)$.
Notice that \eqref{eq-condi} ensures $K_{r_0}\in L^q$, and so we obtain
\eqref{eq-homoStri}, which completes the proof of
Lemma \ref{thm-homoStri}.

With help of \eqref{eq-homoStri2} and \eqref{eq-Strichartz}, it is  standard to conclude the proof of Theorem~\ref{GWPgldata}, 
with the condition on the data replaced by
\beeq\label{eq-data4-h}\|D^{s_1} u_0\|_{L^{\frac{2}{1+2\de}}(\H^n)}+
\|D^{s_1-1} u_1\|_{L^{\frac{2}{1+2\de}}(\H^n)}\le 1,\eneq
where $s_1=n(\frac 12-\frac 1{p+1})+\de$, and $\de>0$ is sufficiently small such that
$\de<\frac{1}{(n-1)(p+1)}$ and $\de\le \frac 12-\frac 1{p+1}$.

Actually, under the assumption of \eqref{eq-data4-h}, we can solve  \eqref{eq-wave-h},  with sufficiently small $\ep$,
by a contraction mapping argument. For any $u\in L^{p+1}(\R^+\times\H^n)$, we define $w=T[u]$ as the solution of
\beeq\label{eq-5.1}\Box_{\H^n}w
=F_p(u),
\eneq
with initial data $(\ep u_0(x), \ep u_1(x))$.
With help of \eqref{eq-homoStri2} and \eqref{eq-Strichartz},  we know that there exists a constant $C>0$ such that
$$
\|T[u]\|_{L^{p+1}(\R^+\times\H^n)}\le C \ep
+C\|F_p(u)\|_{L^{(p+1)/p}(\R^+\times\H^n)}\le C\ep+C' 
\|u\|_{L^{p+1}(\R^+\times\H^n)}^p,
$$
\begin{eqnarray*}
\|T[u]-T[v]\|_{L^{p+1}(\R^+\times\H^n)} & \le &C
\|F_p(u)-F_p(v)\|_{L^{(p+1)/p}(\R^+\times\H^n)} \\
& \le & 
C'' (\|u\|_{L^{p+1}}+
\|v\|_{L^{p+1}})^{p-1}
\|u-v\|_{L^{p+1}}.
\end{eqnarray*}
Thus $T$ is a contraction mapping on the complete set
$$\{u\in L^{p+1}(\R^+\times\H^n), \, 
\|u\|_{L^{p+1}(\R^+\times\H^n)}\le 2C\ep
\},$$
provided that
$C'(2C\ep)^p\le C\ep$ and
$C''(4C\ep)^{p-1}\le 1/2$,
which are be ensured if we assume
 $$ \ep \le 
(2^pC'
+2C'')^{-1/(p-1)}C^{-1}.$$

\medskip
\noindent {\bf Remark.}  It would be interesting to see if there were an analog of Theorem~\ref{GWPsmooth} for spaces of variable
curvature.  Specifically, if $(M,g)$ is a simply connected and complete Riemannian manifold of dimension $n\ge2$ and
has sectional curvatures satisfying $K\in [-\kappa_1,-\kappa_0]$ for some $\kappa_1>\kappa_0>0$ and if $p>1$, are there
always global solutions to the equation
$$(\partial_t^2-\Delta_g+\rho^2 \kappa_0)u=F_p(u), \quad \rho=(n-1)/2,$$
for sufficiently small initial data with fixed compact support?  Note that Corollary~\ref{thm-NegCurv} says that such
a result is true if $\rho^2\kappa_0$ is replaced by any larger constant $k$.  Also, as we mentioned before, one needs
$p>p_S(n)$ for this to be true for $\kappa_0=0$ due to what happens for the standard d'Alembertian in 
Minkowski space, and thus the assumption that $(M,g)$ be negatively curved is needed.

In practice we can always take $\kappa_0=1$.  In this case, a perhaps harder problem would be whether one
has dispersive estimates as in Lemma~\ref{thm-dispersive} assuming that $K\le -1$ and $\inf K>-\infty$.  
Due to properties of the leading term in the Hadamard parametrix (see e.g., \cite{Hangzhou}),
this problem
seems to be related to classical Riemannian volume comparison theorems and the Cartan-Hadamard conjecture.

\section{Super-conformal case}\label{sec:superconformal}
In this section, we use Strichartz type estimates and Sobolev embedding to prove Theorem \ref{thm-superconformal}.


First, by \eqref{eq-2.7'} with $p= p_{\rm conf}$, we know that
for
$q=2(n+1)/(n-1)$ with $q=q' p_{\rm conf}$,
 \beeq\label{eq-2.7''}
\|S(\tau)f\|_{L^q(\H^n)}\les 
\, K_{q}(\tau)\|f\|_{L^{q'}(\H^n)},
\eneq
where 
$K_q(\tau)$ is given in \eqref{eq-5.2}.

Based on  Duhamel's principle and \eqref{eq-2.7''}, we see that 
$$\|u(\tau)\|_{L^{q}(\H^n)}\le C
\int_{0}^{\tau}
K_{q}(\tau-s)
\|F(s)\|_{L^{q'}(\H^n)}d s$$
for  solutions to
$\Box_{\H^n} u=F$ with vanishing data at $\tau=0$.
Since for any $N>0$, we have
$$K_{q}(\tau)\chi_{\tau>0}\les |\tau|^{-(n-1)(1/2-1/q)}(1+|\tau|)^{-N}
=|\tau|^{-(n-1)/(n+1)}(1+|\tau|)^{-N}
,$$ 
by the Hardy-Littlewood-Sobolev inequality and Young's inequality,
we obtain the inhomogeneous Strichartz estimates 
\beeq\label{eq-Strichartz2}\|u\|_{L^{p_{0}}_{\tau}L_{x}^{q}(\R^{+}\times\H^n)}\le C
\|F\|_{L_{\tau}^{p_{1}'}L_{x}^{q'}(\R^+\times\H^n)},
\eneq
provided
that
\beeq\label{eq-Strichartz3} p_{0}, p_{1}\in [1,\infty], \ \frac{1}{p_{0}}+\frac{1}{p_{1}}\in [\frac{n-1}{n+1}, 1]\ .\eneq
If we combine it with the homogeneous estimates,
Lemma \ref{thm-homoStri}, we get
\beeq\label{eq-Strichartz4}\|u\|_{L^{p_{0}}_{\tau}L_{x}^{q}(\R^{+}\times\H^n)}\les
\|D^{s_0} u(0)\|_{L^{r_0'}(\H^n)}+
\|D^{s_0-1}u_\tau (0)\|_{L^{r_0'}(\H^n)}+
\|\Box_{\H^{n}}u\|_{L_{\tau}^{p_{1}'}L_{x}^{q'}(\R^+\times\H^n)},
\eneq
provided
that
$r_{0}\in (2, q]$, $p_{0}\in [1,\infty)$, $p_{1}\in [1,\infty]$
 and 
\beeq\label{eq-Strichartz5} s_0=\frac{n+1}{2}-\frac{1}{r_0}-\frac{n}{q},\ \frac{1}{p_{0}}+\frac{1}{p_{1}}\in [\frac{n-1}{n+1}, 1],\ 
\frac 1{p_{0}}>(n-1)\left(\frac 12-\frac 1{r_0}\right)
\ .\eneq

To prove 
Theorem \ref{thm-superconformal}
where
 $p\in (p_{\rm conf}, 1+\frac{4n}{n^2-3n-2}]$,
 we 
 choose
 $$p_{0}=\frac{(n+1)(p-1)}2, p_{1}'=\frac{p_{0}}p,
r_{0}=\frac{2}{1-2\de},
s_{0}=\frac{n}{n+1}+\de,
 $$
 with $\de\in (0, \frac 2{(n^{2}-1)(p-1)})$, and we have
\beeq\label{eq-Strichartz6}
\|D u\|_{L^{p_{0}}_{\tau }L_{x}^{q}}\les 
\|D^{s_0+1} u(0)\|_{L^{2/(1+2\de)}}+
\|D^{s_0}u_\tau (0)\|_{L^{2/(1+2\de)}}+
\|D F_{p}(u)\|_{L_{\tau}^{p_{0}/p}L_{x}^{q'}}.
\eneq
Thus we see that to finish the proof
as in Section \ref{sec:altprf}, we need only to prove the following nonlinear inequality
\beeq\|D F_{p}(u)\|_{L_{x}^{q'}}
\les \|D u\|^{p}_{L_{x}^{q}}
\ .\eneq

Actually, by \eqref{eq-Fp}, we have the chain rule
\beeq\|D F_{p}(u)\|_{L_{x}^{q'}}
\les \|D u\|_{L_{x}^{q}}
\| u\|^{p-1}_{L_{x}^{(n+1)(p-1)/2}}
\ .\eneq
Moreover, the Sobolev embedding gives us
$$\| u\|_{L_{x}^{(n+1)(p-1)/2}}\les\|D u\|_{L_{x}^{q}}$$
provided that
$$1\ge \frac{n}{q}-\frac{n}{(n+1)(p-1)/2}$$
which is ensured by our assumption that
 $p\in (p_{\rm conf}, 1+\frac{4n}{n^2-3n-2}]$.

\section{Appendix}\label{appendix}

It was pointed out by the referee that, owing to the fact that Bessel potentials of order one in ${\mathbb R}$ have
a logarithmic singularity at the origin (and infinity), Metcalfe and Taylor in \cite{MT} assert that 
Tataru's~\cite{Tataru} proof of the dispersive
estimates \eqref{eq-2.7} and \eqref{eq-2.8} has a small gap when $n=3$.  
As a result, we felt it necessary to give an independent proof of these results here.  As we shall point out at the
end of this Appendix, our proof involves essentially the same analytic family of operators as was employed by
Tataru.  We choose our normalization to highlight how the unboundedness of this particular Bessel potential does
not lead to the gap in Tataru's~\cite{Tataru} proof that was asserted in \cite{MT}.

For simplicity (and since the assertion about the ``gap'' was for $n=3$)
we shall just treat the three-dimensional case of ${\mathbb H}^3$ for the dispersive estimates
in Lemma~\ref{thm-dispersive}.  After the proof we shall explain that the lack of boundedness of Bessel potentials
of certain critical orders does not cause any problems for $n\ge2$.  Incidentally, Tataru's~\cite{Tataru} analytic
family of operators is set up in a way that is very similar to the ones used by Strichartz~\cite{Strichartz} in
the proof of the original  ``Strichartz estimates''.

As before, $D_0=\sqrt{-\Delta_{{\mathbb H}^3}-\rho^2}$ denotes the square root of minus the shifted Laplacian, with $\rho=(n-1)/2$ being equal to one
in this case since we are working in ${\mathbb H}^n$ with $n=3$.
We then make a slight modification of Tataru's argument by introducing the two analytic family of operators
\begin{equation}\label{a.1}
S_z(t)=(z+1)e^{z^2} D^z \, \frac{\sin tD_0}{D_0},
\end{equation}
and
\begin{equation}\label{a.2}
C_z(t)=(z+1)e^{z^2} D^{-1+z} \, \cos tD_0,
\end{equation}
where, as before, $D=\sqrt{-\Delta_{{\mathbb H}^n}}$.  We have  included the crucial factor $(z+1)$ in the definition of $S_z$ to compensate for the
aforementioned unboundedness of the Bessel potential of order one.

To prove the dispersive estimates for ${\mathbb H}^3$ we then require the following bounds which follow from well known properties of Bessel potentials and
the spectral theorem.

\begin{prp}\label{propa}  There is a uniform constant $C$ so that for $t>0$
\begin{equation}\label{a.3}
\|S_z\|_{L^1({\mathbb H}^3) \to L^\infty({\mathbb H}^3)}, \, \, 
\|C_z\|_{L^1({\mathbb H}^3) \to L^\infty({\mathbb H}^3)} \, \, \le C/\sinh t, \, \, \text{if } \, \, 
\text{Re }z=-1,
\end{equation}
and also
\begin{equation}\label{a.4}
\|S_z(t) \|_{L^2({\mathbb H}^3)\to L^2({\mathbb H}^3)}\le C(1+t), \, \, \,
\|C_z(t) \|_{L^2({\mathbb H}^3)\to L^2({\mathbb H}^3)}\le C, \quad
\text{if } \, \, \text{Re }z=1.
\end{equation}
\end{prp}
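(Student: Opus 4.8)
The plan is to prove the two endpoint bounds \eqref{a.3} and \eqref{a.4} separately; together they are exactly what is needed to run Stein's analytic interpolation theorem on the families \eqref{a.1}--\eqref{a.2} across the strip $-1\le \mathrm{Re}\,z\le 1$, which then reproduces the dispersive estimates of Lemma~\ref{thm-dispersive} for $\H^3$ (the Gaussian factor $e^{z^2}$ is inserted precisely to guarantee the admissible growth of the operator norms in $\mathrm{Im}\,z$ required for interpolation). The line $\mathrm{Re}\,z=1$ is the easy one and follows from the spectral theorem; the line $\mathrm{Re}\,z=-1$ carries all of the difficulty, and is where the factor $(z+1)$ earns its place.

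For \eqref{a.4}, write $z=1+i\theta$. Since $S_z(t)$ and $C_z(t)$ are functions of $\sqrt{-\Delta_{\H^3}}$, Plancherel reduces their $L^2\to L^2$ norms to the supremum over the spectrum of their symbols. Writing the spectral variable of $D_0$ as $\lambda\ge 0$, so that $D=\sqrt{\lambda^2+\rho^2}=\sqrt{\lambda^2+1}$ with $\rho=1$, the symbol of $C_z(t)$ is $(z+1)e^{z^2}(\lambda^2+1)^{i\theta/2}\cos(t\lambda)$, of modulus $\lesssim \sqrt{4+\theta^2}\,e^{1-\theta^2}\lesssim 1$ uniformly in $\theta,\lambda,t$, while the symbol of $S_z(t)$ is $(z+1)e^{z^2}(\lambda^2+1)^{(1+i\theta)/2}\,\sin(t\lambda)/\lambda$, whose size is governed by the elementary bound $\sqrt{\lambda^2+1}\,|\sin t\lambda|/\lambda\lesssim 1+t$ (split into $\lambda\le 1$ and $\lambda>1$). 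This yields $\|C_z\|_{L^2\to L^2}\lesssim 1$ and $\|S_z\|_{L^2\to L^2}\lesssim 1+t$.

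For \eqref{a.3}, write $z=-1+i\theta$, so that $z+1=i\theta$. The $L^1\to L^\infty$ norm is the supremum of the radial convolution kernel, and on $\H^3$ this kernel is elementary and needs no $c$-function: using the spherical function $\varphi_\lambda(r)=\sin(\lambda r)/(\lambda\sinh r)$ and the Plancherel density $\lambda^2$, the kernel of a bi-invariant operator $m(D_0)$ at geodesic distance $r$ is $c\,(\sinh r)^{-1}\int_0^\infty m(\lambda)\,\lambda\sin(\lambda r)\,d\lambda$. For $S_z(t)$ this gives, up to constants, $(\sinh r)^{-1}(z+1)e^{z^2}\int_0^\infty(\lambda^2+1)^{(-1+i\theta)/2}\sin(t\lambda)\sin(r\lambda)\,d\lambda$, and for $C_z(t)$ the analogous expression with $\cos(t\lambda)$ and one extra power of $\lambda$. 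A product-to-sum identity turns each into a difference of one-dimensional oscillatory integrals evaluated at the arguments $a=t\mp r$, i.e.\ a difference of one-dimensional Bessel-potential kernels of order $1-i\theta$. After subtracting a low-frequency piece supported in bounded $\lambda$ (which contributes a smooth, exponentially decaying kernel), the singular behaviour as $a\to0$ is governed by the Mellin identity $\int_0^\infty\lambda^{-1+i\theta}\cos(a\lambda)\,d\lambda=\Gamma(i\theta)\cos(\tfrac{\pi i\theta}{2})\,|a|^{-i\theta}$ and its sine analogue for $C_z$. Here $|a|^{-i\theta}$ has modulus one, while $\Gamma(i\theta)$ has a simple pole at $\theta=0$ with residue $1$; the built-in prefactor $z+1=i\theta$ cancels this pole exactly, leaving a kernel bounded uniformly in $a$ and $\theta$. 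Together with the weight $(\sinh r)^{-1}\approx(\sinh t)^{-1}$ near the singular set $r=t$, this produces the asserted $C/\sinh t$ bound there.

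The main obstacle is exactly this uniformity at the \emph{critical} Bessel order: the one-dimensional Bessel potential of order one is unbounded (logarithmically), which is the source of the claimed ``gap'' in \cite{MT}. The identity above makes the resolution transparent: for $\theta\ne0$ the singularity is not a logarithm but the bounded oscillation $|a|^{-i\theta}$ with coefficient $\Gamma(i\theta)\sim (i\theta)^{-1}$, and the normalization factor $z+1=i\theta$ in \eqref{a.1}--\eqref{a.2} is designed precisely to remove it (for $C_z$ the sine transform supplies its own vanishing through $\sin(\tfrac{\pi i\theta}{2})$, so the same conclusion holds). Two routine regions remain: for $r$ near $0$ with $t$ large one plays the cancellation $G(t-r)-G(t+r)=O(r)$ off against $(\sinh r)^{-1}=O(1/r)$ and uses the exponential decay of the Bessel kernel in $t$; and uniformity for large $|\theta|$ is provided by the Gaussian $|e^{z^2}|=e^{1-\theta^2}$, which dominates the mild $O(\theta^{1/2})$ growth of $(z+1)\Gamma(i\theta)\cos(\tfrac{\pi i\theta}{2})$. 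Assembling these pieces gives $\sup_r|K(r)|\lesssim 1/\sinh t$ for both $S_z$ and $C_z$ on $\mathrm{Re}\,z=-1$, completing the proof of Proposition~\ref{propa}.
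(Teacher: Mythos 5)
Your architecture matches the paper's at the top level: the line $\mathrm{Re}\,z=1$ by the spectral theorem, the line $\mathrm{Re}\,z=-1$ by pointwise kernel bounds via the explicit radial kernel formula on $\H^3$ (your spherical-function formula is the same as the paper's $\frac{-1}{4\pi^2\sinh r}\partial_\tau\Hat m(\tau)|_{\tau=r}$), and the key observation that the factor $z+1$ cancels the pole of $\Gamma(i\theta)$ at the critical Bessel order, which is indeed the heart of the dispute with \cite{MT}. However, your proof of \eqref{a.3} has a genuine gap: the decomposition into a low-frequency piece plus a Mellin main term cannot produce the \emph{exponential decay} in $a=t\mp r$ that the estimate requires. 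The bound $\|S_z(t)\|_{L^1\to L^\infty}\le C/\sinh t$ means $\sup_r(\sinh r)^{-1}\,|K(t-r)-K(t+r)|\le C/\sinh t$ for the one-dimensional kernel $K$; taking $r=t/2$ with $t$ large, the prefactor $(\sinh r)^{-1}\approx e^{-t/2}$ forces $|K(t/2)|\lesssim e^{-t/2}$, i.e.\ one needs $|K(a)|\lesssim e^{-|a|}$ uniformly in $\theta$ — exactly the content of the paper's Lemma~\ref{lemmaa}. Neither of your two pieces can deliver this: the Fourier transform of a smooth cutoff supported in bounded $\lambda$ decays faster than any polynomial but \emph{never} exponentially (exponential decay of the transform would force analyticity of the cutoff, contradicting compact support), and your Mellin term $\Gamma(i\theta)\cos(\tfrac{\pi i\theta}{2})\,|a|^{-i\theta}$ has modulus \emph{constant} in $a$, with no decay at all. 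The true kernel's exponential decay arises from cancellation \emph{between} these two pieces, which is destroyed the moment you estimate them separately; your later appeal to "the exponential decay of the Bessel kernel in $t$" assumes precisely the uniform-in-$\theta$ statement that has to be proved. The paper sidesteps this by never splitting: it invokes the Aronszajn--Smith formula \eqref{a.6}, which exhibits the factor $e^{-|r|}$ explicitly, and then shows the remaining subordination integral is $O(1)$ uniformly, the factor $z+1=is$ being used there to cancel the $1/|s|$ divergence of $\int_1^\infty e^{-|r|\tau}\tau^{-1-is}\,d\tau$ as $s\to0$ — the same cancellation you identify, but implemented in a representation that simultaneously yields the decay.

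A secondary point: the region you dismiss as routine ($r$ small) is not entirely so. The mean value theorem step $K(t-r)-K(t+r)=O(r\sup|K'|)$ needs a bound on $K'$, which is \emph{more} singular than $K$ near the origin (differentiating the order-one Bessel potential produces a $1/a$-type singularity); this works when $t\ge2$, as in the paper, but fails when $t<2$ and $t\pm r$ approach $0$. The paper devotes a separate integration-by-parts argument (with the cutoff $\rho(t\eta)$, exploiting oddness of the integrand and two integrations by parts) to obtain the $O(1/t)$ bound in the region $0<2r<t<2$; some such argument would be needed in your scheme as well.
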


By analytic interpolation, \eqref{a.3} and \eqref{a.4} yield \eqref{eq-2.7} and \eqref{eq-2.8}.

Since \eqref{a.4} trivially follows from the spectral theorem, we just need to prove \eqref{a.3}, which
amounts to showing that the kernels of $S_z(t)$ and $C_z(t)$ are $O(1/\sinh t)$ when $\text{Re }  z=-1$.

To prove these bounds we shall make use of the following simple lemma.

\begin{lem}\label{lemmaa}
Let
$$G_z(r)=(z+1)e^{z^2}\int_{-\infty}^\infty (1+\eta^2)^{z/2} \, e^{i\eta r} \, d\eta,$$
and
$$H_z(r)=(z+1)e^{z^2}\int_{-\infty}^\infty \eta\, (1+\eta^2)^{-1/2+z/2} \, e^{i\eta r}\, d\eta.$$
Then if $r\ne 0$
\begin{equation}\label{a.5}
|G_z(r)|, \, \, |H_z(r)|\le Ce^{-|r|}, \quad \text{if } \, \, \text{Re }z=-1.
\end{equation}
\end{lem}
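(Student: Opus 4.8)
The plan is to prove the two kernel bounds in Lemma~\ref{lemmaa} directly from the Fourier integral representations, treating $G_z$ and $H_z$ on the critical line $\mathrm{Re}\, z=-1$, where $z=-1+i\tau$ for $\tau\in\R$. The key point is that although the symbol $(1+\eta^2)^{z/2}$ with $\mathrm{Re}\,z=-1$ corresponds to a Bessel potential of order one, whose kernel has a logarithmic singularity at the origin in one dimension, the prefactor $(z+1)=i\tau$ vanishes on this line precisely where the potential degenerates. This is exactly the compensating factor remarked upon after \eqref{a.1}, and exploiting it is what makes the bounds hold uniformly.

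First I would fix $z=-1+i\tau$ and write $G_z(r)=i\tau\, e^{z^2}\int_{-\infty}^\infty (1+\eta^2)^{(-1+i\tau)/2}\, e^{i\eta r}\, d\eta$. The plan is to recognize the integral as (a multiple of) the one-dimensional Bessel potential kernel of order $1-i\tau$, which is a modified Bessel function of the second kind; the relevant facts are that its decay for $|r|$ bounded away from $0$ is like $e^{-|r|}$, and that its only singularity at $r=0$ is logarithmic, with coefficient controlled by $1/\Gamma(\tfrac{1-i\tau}{2})$. Away from the origin, say $|r|\ge 1$, the exponential decay together with the $e^{z^2}=e^{(-1+i\tau)^2}=e^{1-\tau^2}e^{-2i\tau}$ factor, which is $O(e^{-\tau^2})$, gives the bound $|G_z(r)|\le C|\tau|e^{-\tau^2}e^{-|r|}\le Ce^{-|r|}$ uniformly in $\tau$, since $|\tau|e^{-\tau^2}$ is bounded. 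For $|r|\le 1$ the logarithmic singularity is multiplied by the $|\tau|$ from $(z+1)$ and by the rapidly decaying $e^{-\tau^2}$, and one checks that the reciprocal Gamma factor keeps the whole expression bounded by a $\tau$-uniform constant times $e^{-|r|}$; this is where the vanishing of $(z+1)$ on the line cancels the otherwise problematic $\log|r|$ blow-up. A clean way to organize this is to integrate by parts in $\eta$ or to use the explicit modified-Bessel representation and estimate the small-$r$ and large-$r$ regimes separately.

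For $H_z(r)=i\tau\, e^{z^2}\int_{-\infty}^\infty \eta(1+\eta^2)^{-1/2+z/2}\, e^{i\eta r}\, d\eta$ the treatment is entirely parallel: on the line $\mathrm{Re}\,z=-1$ the symbol $\eta(1+\eta^2)^{-1+i\tau/2}$ is again of order $-1$ in $\eta$ up to the odd factor, so its inverse Fourier transform is a derivative of the same Bessel-potential kernel, again with at worst a logarithmic local singularity compensated by the $(z+1)=i\tau$ prefactor and the $e^{-\tau^2}$ Gaussian decay. I would therefore carry out the same split into $|r|\le 1$ and $|r|\ge 1$ and obtain $|H_z(r)|\le Ce^{-|r|}$ uniformly in $\tau$.

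The main obstacle I anticipate is making the small-$r$ analysis genuinely uniform in $\tau$: one must verify that the logarithmic coefficient, the reciprocal Gamma factor $1/\Gamma(\tfrac{1-i\tau}{2})$, and the algebraically growing $|\tau|$ from $(z+1)$ combine against the Gaussian $e^{-\tau^2}$ to give a constant independent of $\tau$, rather than merely a finite bound for each fixed $\tau$. This requires tracking the precise asymptotics of the Bessel potential (or equivalently the Gamma functions appearing in its Mellin/series expansion) as the imaginary part $\tau\to\pm\infty$; Stirling-type bounds show $1/\Gamma(\tfrac{1-i\tau}{2})$ grows at most like $e^{c|\tau|}$ for small $c$, which is dominated by $e^{-\tau^2}$, so uniformity does hold. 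Once Lemma~\ref{lemmaa} is established, the kernels of $S_z(t)$ and $C_z(t)$ are obtained by passing from the Fourier variable $\eta$ to the spectral variable through the $\H^3$ spherical function, where the Euclidean-type volume factor $1/\sinh t$ emerges, giving the $O(1/\sinh t)$ kernel bound \eqref{a.3} and completing the proof.
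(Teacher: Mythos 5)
Your overall strategy (explicit one-dimensional Bessel-potential kernels, a split into $|r|\le 1$ and $|r|\ge 1$, and the recognition that the factors $(z+1)$ and $e^{z^2}$ must supply uniformity in $\mathrm{Im}\,z$) lies in the same circle of ideas as the paper's proof, but the step on which everything hinges --- the small-$r$ bound --- rests on a false ``relevant fact,'' and the cancellation mechanism you describe cannot work as stated. You assert that for $z=-1+i\tau$ the kernel $\int(1+\eta^2)^{(-1+i\tau)/2}e^{i\eta r}\,d\eta$ has a logarithmic singularity at $r=0$ with coefficient controlled by $1/\Gamma(\tfrac{1-i\tau}{2})$, and that this singularity is then ``cancelled'' upon multiplication by $(z+1)=i\tau$. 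But a genuine logarithmic singularity multiplied by a fixed nonzero constant is still unbounded: if your description of the kernel were correct for some fixed $\tau\ne 0$, then $|G_z(r)|\to\infty$ as $r\to 0$ and the bound $|G_z(r)|\le Ce^{-|r|}\le C$ of \eqref{a.5} would simply be false. What actually happens is different: for $\tau\ne 0$ the kernel is (up to explicit factors) a modified Bessel function of purely \emph{imaginary} order, $K_{i\tau/2}(|r|)$, which has no singularity at $r=0$ at all; it oscillates like $|r|^{\pm i\tau/2}$ and stays bounded near the origin, but with a bound that blows up like $1/|\tau|$ as $\tau\to 0$ (morally the logarithm is saturated at height $1/|\tau|$, i.e. $|K_{i\tau/2}(r)|\lesssim \min(1+|\log r|,\,1/|\tau|)$ for small $r$ and small $\tau$). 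It is this degeneration as $\tau\to 0$, not a pointwise logarithm, that the prefactor $(z+1)$ compensates; at $\tau=0$ one has $G_{-1}\equiv 0$. Proving that uniform small-argument bound for imaginary-order Bessel functions (e.g. via $K_\nu=\tfrac{\pi}{2\sin \pi\nu}(I_{-\nu}-I_\nu)$ and its series) is exactly the content hidden in your phrase ``one checks,'' and it is the whole point of the lemma --- the same subtlety that produced the dispute with \cite{MT}. Your treatment of $H_z$ inherits the same issue; note moreover that there the factor $(z+1)$ cancels identically, since the symbol is $(z+1)^{-1}\partial_\eta(1+\eta^2)^{(z+1)/2}$ times the prefactor, so what tames the kernel near $r=0$ is the explicit factor $r$ in \eqref{H.formula}, not $(z+1)$.

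By contrast, the paper's proof avoids Bessel-function asymptotics entirely: the Aronszajn--Smith formula \eqref{a.6} exhibits the kernel as $e^{-|r|}$ times $\frac{1}{\Gamma(z/2)\Gamma(1-z/2)}\int_0^\infty e^{-|r|\tau}(\tau+\tau^2/2)^{-z/2}\,d\tau$, the required $O(1)$ bound \eqref{7.5'} on the critical line is then obtained by an elementary integration by parts (writing $s\,\tau^{-1-is}$ as a derivative of $\tau^{-is}$, with the reciprocal Gamma factors dominated by $e^{-s^2}$), and the $H_z$ bound follows from the identity \eqref{H.formula} together with the trivial estimate $\bigl|r\int_0^\infty e^{-|r|\tau}(\tau+\tau^2/2)^{-is/2}\,d\tau\bigr|\le 1$. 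If you wish to keep your route, you must replace the log-singularity claim by an actual proof of the uniform near-origin bound for $K_{i\tau/2}$; otherwise, the integration-by-parts argument is the cleaner repair.
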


To prove this, we will use the well known formula for Bessel potentials (see e.g., \cite{AS}),
\begin{align}\label{a.6}
&\int_{-\infty}^\infty (1+\eta^2)^{-z/2} e^{i\eta r}\, d \eta
\\
&\, \, =
\frac{2\pi e^{-|r|}}{2^{z/2}\Gamma(z/2)\Gamma(1-z/2)} \, 
\int_0^\infty e^{-|r| \tau}\bigl(\tau+\tau^2/2)^{-z/2}\, d\tau \notag
\\
&\, \, =
\frac{2\pi e^{-|r|}}{2^{z/2}\Gamma(z/2)} \, \times \, \frac1{\Gamma(1-z/2)}
\int_0^\infty \tau^{-z/2} \, \Bigl((1+\tau/2)^{-z/2} e^{-|r|\tau}\Bigr) \, d\tau,
\, \, \, r\ne 0.
\notag
\end{align}

This would  leads to the bound for $G_z$ in \eqref{a.5} if we could show that 
\begin{multline}\label{7.5'}\tag{7.5$'$}\frac{se^{-s^2}}{2^{(1+is)/2}\Gamma((1+is)/2) \, \Gamma((1-is)/2)}
\int_{0}^\infty  e^{-|r|\tau} \bigl(\tau+\tau^2/2\bigr)^{-(1+is)/2} \, d\tau 
\\
=O(1).
\end{multline}

Note that if $r\ne 0$ is fixed, the first and last terms  in \eqref{a.6} are entire functions of $z\in {\mathbb C}$.  There is no problem interpreting
the first term, since it is a standard oscillatory integral, while, because of the $1/\Gamma(1-z/2)$ factor, the last term in 
 \eqref{a.6} is well defined as a  standard Riemann-Liouville integral.
 
In \cite{AS}, \eqref{a.6} is stated for 
$z\in (1,2)$
 (i.e., (2, 10)--(3, 6) in \cite{AS}), which, by analytic continuation, of course implies the formula for
$\text{Re }z\in (1, 2)$.
   If $r\ne 0$ is fixed, then, as we just mentioned, the first term
in this formula and the last term are entire functions of $z$, and so the formula is valid for all $z\in {\mathbb C}$.

To verify \eqref{7.5'}, fix $\beta\in C^\infty(\R)$ satisfying $\beta(\tau)=0$ for $\tau\le1$ and $\beta(\tau)=1$ for
$\tau\ge 2$.  Then clearly the left side of \eqref{7.5'} is of the form
\begin{align*}
&\frac{se^{-s^2}}{2^{(1+is)/2}\Gamma((1+is)/2) \, \Gamma((1-is)/2)}
\int_{0}^\infty \beta(\tau)  e^{-|r|\tau} \bigl(\tau+\tau^2/2\bigr)^{-(1+is)/2} \, d\tau +O(1)
\\
&\qquad=\frac{se^{-s^2}}{\Gamma((1+is)/2) \, \Gamma((1-is)/2)}\int_0^\infty \beta(\tau) e^{-|r|\tau}
\tau^{-1-is} \, d\tau +O(1)
\\
&\qquad=\frac{ie^{-s^2}}{\Gamma((1+is)/2) \, \Gamma((1-is)/2)}\int_0^\infty \beta(\tau)e^{-|r|\tau}
\frac{d}{d\tau} \tau^{-is} \, d\tau+
O(1)
 =O(1),
\end{align*}
as claimed in \eqref{7.5'}.

To prove the bounds for $H_z(r)$ in \eqref{a.5} , we first note that
\begin{equation}\label{H.formula}
H_z(r)=\tfrac{-ie^{-2z-1}}{z+2}rG_{z+1}(r), \, \, r\ne 0, \, \, \end{equation}
due to the fact that if $m\in {\mathcal S}'(\R)$, then the  Fourier transform of
$\tfrac\partial{\partial \eta}m(\eta)$ is $ir\Hat m(r)$, where $\Hat m\in {\mathcal S}'(\R)$ is
the Fourier transform of $m$.
Consequently, we would obtain the bounds for $H_z(r)$ in \eqref{a.5} if
$$e^{-s^2}r\int_{-\infty}^\infty (1+\eta^2)^{is/2}e^{i\eta r}=O(e^{-|r|}).$$
This clearly follows from \eqref{a.6}
and the fact that
$$\Bigl| \, r\int_0^\infty e^{-|r|\tau} \bigl(\tau+\tau^2/2\bigr)^{-is/2} \, d\tau \, \Bigr|
\le \int_0^\infty |r| \, e^{-|r|\tau} \, d\tau =1,
$$
which completes the proof.

We can also give another simple proof of the bounds for $H_z$ in \eqref{a.5} using well-known formulae for Bessel potentials in ${\mathbb R}^3$.  We first observe that
\begin{align*}
H_z(r)&=2
i
(z+1)e^{z^2} \int_0^\infty \rho (1+\rho^2)^{-(1-z)/2} \, \sin (r\rho) \, d\rho
\\
&=(2\pi)^{-1} i(z+1)e^{z^2} 
r
\int_{{\mathbb R}^3} (1+|\xi|^2)^{-(1-z)/2} \, e^{i\langle \xi, (r,0,0)\rangle} \, d\xi,
\end{align*}
due to the fact that, if $d\sigma$ denotes surface measure on the two-sphere, we have
$$\frac{|x|}{4\pi} \int_{S^2} e^{ix\cdot \omega} \, d\sigma(\omega) = \sin |x|.$$
Thus, by standard formulae for Bessel potentials (i.e.,  (2, 10)--(3, 6) in \cite{AS})), we have, as before, 
\begin{multline}\label{3d}H_z(r)=
2\pi  i (z+1)e^{z^2}
\frac{e^{-|r|}}{2^{(1-z)/2} \Gamma((1-z)/2)\Gamma((3+z)/2)} \\
\times
 r
\int_0^\infty e^{-|r|\tau} \bigl(\tau+\tau^2/2\bigr)^{(1+z)/2} \, d\tau\ .
\end{multline}
 Using this formula, we can easily obtain another proof of the 
bounds for $H_z$ in \eqref{a.5}.

\bigskip
\noindent{\bf Remark.}  Note that the $(z+1)$ factor is needed in the definition of $G_z$  to
ensure that \eqref{a.5} holds near $z=-1$.   In particular, at the value of $z=-1$ we have $G_{-1}\equiv 0$.  
Additionally, since $G_0(r)=2\pi\delta_0(r)$ and $r\delta_0(r)\equiv 0$, \eqref{H.formula} yields $H_{-1}(r)\equiv0$, which (by what follows) is necessary as $C_{-1}(t)\equiv 0$ in \eqref{a.2}.  On the other hand, even though we included the $(z+1)$ factor in the definition of $H_z$ for consistency, \eqref{3d} shows that it is superfluous.

\bigskip

\begin{proof}[End of proof of Proposition~\ref{propa}]
If $m(\tau)$ is an even function of $\tau$, the operator $m(D_0)$ has kernel
$$\frac{-1}{4\pi^2 \sinh r}\, \frac{\partial}{\partial \tau} \Hat m(\tau)|_{\tau=r}, \quad
r=d_g(x,y),$$
where $d_g(x,y)$ is the hyperbolic distance and $\Hat m$ is the Fourier transform of $m$.  (See \cite{Taylor}.)

Thus, the kernel of $S_z(t)$ equals
$$\frac{-(z+1)e^{z^2}}{4\pi^2i\sinh r} \int_{-\infty}^\infty (1+\eta^2)^{z/2}\sin t\eta \, e^{-ir \eta} \, d\eta.$$

By Lemma~\ref{lemmaa} this is $O((\sinh t)^{-1})$ as desired 
when $\text{Re }z=-1$ if either $r\ge 1$ or $r\ge t/2$.  For the remaining case where $0<r<1$ and $2r<t$ 
we claim that
\begin{equation}\label{a.7}
\Bigl| \, \frac{se^{-s^2}}r \int_{-\infty}^\infty (1+\eta^2)^{-1/2+is/2} \, \sin t\eta \, e^{-i\eta r}\, d\eta\, \Bigr|
\le C(\sinh t)^{-1}, \quad s\in {\mathbb R},
\end{equation}
which would finish the proof of the bounds for $S_z$ in \eqref{a.3}.

Using \eqref{a.6}, Euler's formula and the mean value theorem shows that this bound is valid
in this case when $t\ge 2$.  So we would be done with our $L^1({\mathbb H}^3)\to L^\infty({\mathbb H}^3)$
bounds for $S_z(t)$, $\text{Re }z=-1$, if we could show that the left side of \eqref{a.7} is
$O(1/t)$ when $0<2r<t<2$.

To prove this fix an even nonnegative function $\rho\in C^\infty({\mathbb R})$ which vanishes for $\eta\in (-1,1)$
and equals one when $|\eta|\ge2$.  Then since $\eta\to (1+\eta^2)^{-1/2+is/2}\sin t\eta$ is odd the left side
of \eqref{a.7} equals
\begin{multline*}\Bigl|\, \frac{se^{-s^2}}r\int_{-\infty}^\infty (1+\eta^2)^{-1/2+is/2}\sin t\eta \, \sin r\eta \, d\eta\, \Bigr|
\\
= \, 
\Bigl|\, \frac{se^{-s^2}}r \int_{-\infty}^\infty \rho(t\eta)(1+\eta^2)^{-1/2+is/2} \, \sin r\eta \, \sin t\eta \, d\eta\, \Bigr|
+O(1/t).\end{multline*}

If we integrate by parts, we find that the first term in the right is majorized by
\begin{multline*}
\Bigr|\, \frac{se^{-s^2}}t\int_{-\infty}^\infty \rho(t\eta) \, (1+\eta^2)^{-1/2+is/2} \, \cos r \eta \, \cos t\eta  \, d\eta\, \Bigr|
\\
+ \, \Bigl|\, \frac{se^{-s^2}}{tr} \int_{-\infty}^\infty \frac{d}{d\eta}\bigl(\, \rho(t\eta)(1+\eta^2)^{-1/2+is/2}\, \bigr)
\, \sin r\eta \, \cos t\eta \, d\eta \, \Bigr|.
\end{multline*}
By Euler's formula and a simple integration by parts argument, since $0<2r<t<2$, the first term is $O(1/t)$ as
desired.  If we integrate by parts one more time, we find that the remaining term is
$$\Bigl|\, 
\frac{se^{-s^2}}{t^2r}\int_{-\infty}^\infty \frac{d^2}{d\eta^2}\bigl(\, \rho(t\eta)(1+\eta^2)^{-1/2+is/2}\, \bigr)
\, \sin r\eta \sin t\eta \, d\eta \, \Bigr| +O(1/t).$$
Since the first term here is also clearly $O(1/t)$, this finishes the proof of our bounds for $S_z(t)$
in \eqref{a.3}.

Since the proof of the bounds for $C_z(t)$ in \eqref{a.3} follows from the same argument,
the proof of Proposition~\ref{propa} is complete.
\end{proof}

\medskip
\noindent {\bf Remarks.}  To prove the dispersive estimates in Lemma~\ref{thm-dispersive} when
$n=3$, Tataru~\cite{Tataru} used the analytic family of operators
\begin{equation}\label{a.8}
a(z)\bigl(D_0^2+\beta^2\bigr)^{z/2} \, \frac{\sin tD_0}{D_0},
\end{equation}
for fixed $\beta>\rho$ (which is more favorable than the case $\beta=\rho$ treated above), with, {\em crucially},
\begin{equation}\label{a.9}
a(z)=\frac{e^{z^2}}{\Gamma(z+\rho)}, \quad \rho=(n-1)/2.
\end{equation}
In \cite{MT} it was noted that when $n=3$ the operators
\begin{equation}\label{a.10}
\bigl(D_0^2+\beta^2\bigr)^{z/2} \, \frac{\sin tD_0}{D_0}, \quad t\ne 0,
\end{equation}
{\em do not} map $L^1({\mathbb H}^3)\to L^\infty({\mathbb H}^3)$ if $z=-1$.  This is due to the fact
that for any fixed $\beta>0$ the Bessel potential
\begin{equation}\label{a.11}
r\to \int_{-\infty}^\infty \bigl(\beta^2+\eta^2\bigr)^{z/2} \, e^{i\eta r}\, d\eta
\end{equation}
is not in $ L^\infty({\mathbb R})$, if  $z=-1$.
Indeed, for instance, this particular Bessel potential is $\approx |\ln r|$ for $r>0$ near the origin.

We have to point out that inequality (27) in Tataru~\cite{Tataru}, which is disputed on p.~3496 of
\cite{MT}, does {\em not} lead to their (3.26), which is the assertion that the operators in \eqref{a.10}
do map $L^1({\mathbb H}^3)\to L^\infty({\mathbb H}^3)$ with norm $O(1/\sinh |t|)$.  It is this {\em incorrect
inequality} (i.e., (3.26) in \cite{MT}) that lead the authors in \cite{MT} to say that there is a gap in
Tataru's argument.

To be more specific, Tataru {\em never} claims that this inequality is valid, and, {\em moreover}, his proof
does {\em not} imply this fallacious inequality.  Indeed, what Tataru proves is that the operators defined
in \eqref{a.8}--\eqref{a.9} satisfy
\begin{equation}\label{a.12}
\bigl\| \, a(z) \, \bigl(D^2_0+\beta^2\bigr)^{z/2} \, \frac{\sin tD_0}{D_0} \, \bigr\|_{L^1({\mathbb H}^3)
\to L^\infty({\mathbb H}^3)} \le C/\sinh |t|, \quad \text{if } \, \, \text{Re }z=-1.
\end{equation}
It seems the authors in \cite{MT} overlooked the fact that $z\to 1/\Gamma(z+1)$  behaves
like $(z+1)$ near $z=-1$, and, consequently
$$a(-1)=0.$$
Thus, since $0\cdot \ln r\equiv 0$, $r>0$, Tataru's assertion \eqref{a.12} when $z=-1$ is {\em not}
(3.26) in \cite{MT} since
$$a(z)\bigl(D_0^2+\beta^2\bigr)^{z/2} \, \frac{\sin tD_0}{D_0} \equiv 0, \quad
\text{if } \, z=-1, \, \, \, \text{and } \, t\ne 0,$$
which means that Tataru's estimate \eqref{a.12} is trivial in this disputed case.

We proved \eqref{a.12} above when $\beta=\rho=1$ and $n=3$, with, in our case
$$a(z)=(z+1)e^{z^2}.$$
Just as Tataru's proof relies crucially on the holomorphic damping factor $1/\Gamma(z+1)$, ours used,
in a critical way, the damping factor $(z+1)$ so that, in our case, $a(z)=0$ for $z=-1$.

In any dimension $n\ge 2$, Tataru's normalizing factor $a(z)$ vanishes when $z=-\rho$ and thus, 
like the related original argument of Strichartz~\cite{Strichartz}, avoids possible problems that could
arise from Bessel potentials of order one in ${\mathbb R}$ being unbounded.

As we said before, we are grateful to the referee for bringing to our attention the potential (but non-existent)
problems of our use of Lemma~\ref{thm-dispersive} so that we could address this issue in hopes
that the minor oversight in \cite{MT} about the (non-existent) ``gap'' in Tataru's proof of this estimate is not
propagated further.



\end{document}